\newcommand{\mihref}[3][blue]{\href{#2}{\color{#1}{#3}}}
\newcommand{\margnote}[1]{
\ifthenelse{\boolean{shownotes}}%
{\marginpar{\raggedright\tiny\texttt{#1}}}%
{}%
}
\newcommand{\hole}[1]{
\ifthenelse{\boolean{shownotes}}%
{\begin{center} \fbox{ \rule {.25cm}{0cm}
\rule[-.1cm]{0cm}{.4cm} \parbox{.85\textwidth}{\begin{center}
\texttt{#1}\end{center}} \rule {.25cm}{0cm}}\end{center}}
{}
}
\numberwithin{equation}{section} 
\newcounter{cont}[section]
\newtheorem{theorem}[cont]{Theorem}
\newtheorem{lem}[cont]{Lemma}
\newtheorem{corollary}[cont]{Corollary}
\newtheorem{lemma}[cont]{Lemma}
\theoremstyle{definition}
\newtheorem{definition}[cont]{Definition}
 \theoremstyle{remark}
 \newtheorem{rem}[cont]{Remark}
\renewcommand{\Re}{\mathrm{Re}\,} 
\renewcommand{\Im}{\mathrm{Im}\,}
\newcommand{\e}{\varepsilon}
\newcommand{\N}{\mathbb{N}}
\newcommand{\R}{\mathbb{R}}
\newcommand{\C}{\mathbb{C}}
\newcommand{\cL}{{\mathcal{L}}}
\newcommand{\cI}{{\mathcal{I}}}
\newcommand{\cO}{{\mathcal{O}}}
\newcommand{\ess}{\sigma_\mathrm{\tiny{ess}}}
\newcommand{\ptsp}{\sigma_\mathrm{\tiny{pt}}}
\begin{document}

\title[Spectral stability of small-amplitude dispersive shocks in QHD]{Spectral stability of small-amplitude dispersive shocks in quantum hydrodynamics with viscosity}

\author[R. Folino]{Raffaele Folino}

\address[R. Folino]{Departamento de Matem\'aticas y Mec\'anica\\Instituto de 
Investigaciones en Matem\'aticas Aplicadas y en Sistemas\\Universidad Nacional Aut\'onoma de 
M\'exico\\Circuito Escolar s/n, Ciudad Universitaria C.P. 04510 Cd. Mx. (Mexico)}

\email{folino@mym.iimas.unam.mx}

\author[R.G. Plaza]{Ram\'on G. Plaza}

\address[R. G. Plaza]{Departamento de Matem\'aticas y Mec\'anica\\Instituto de 
Investigaciones en Matem\'aticas Aplicadas y en Sistemas\\Universidad Nacional Aut\'onoma de 
M\'exico\\Circuito Escolar s/n, Ciudad Universitaria C.P. 04510 Cd. Mx. (Mexico)}

\email{plaza@mym.iimas.unam.mx}

\author[D. Zhelyazov]{Delyan Zhelyazov}

\address[D. Zhelyazov]{Departamento de Matem\'aticas y Mec\'anica\\Instituto de 
Investigaciones en Matem\'aticas Aplicadas y en Sistemas\\Universidad Nacional Aut\'onoma de 
M\'exico\\Circuito Escolar s/n, Ciudad Universitaria C.P. 04510 Cd. Mx. (Mexico)}

\email{delyan.zhelyazov@iimas.unam.mx}

\keywords{Dispersive shocks; quantum hydrodynamics; spectral stability; energy estimates}

\subjclass[2010]{76Y05, 35Q35, 35B35, 35P15}


\begin{abstract} 
A compressible viscous-dispersive Euler system in one space dimension in the context of quantum hydrodynamics is considered. The dispersive term is due to quantum effects described through the Bohm potential and the viscosity term is of linear type. It is shown that small-amplitude viscous-dispersive shock profiles for the system under consideration are spectrally stable, proving in this fashion a previous numerical observation by Lattanzio \emph{et al.} \cite{LMZ20a,LMZ20b}. The proof is based on spectral energy estimates which profit from the monotonicty of the profiles in the small-amplitude regime.
\end{abstract}


\maketitle

\section{Introduction}\label{sec:intro}
In this paper we investigate stability properties of dispersive shock profile solutions to the following quantum
hydrodynamics (QHD) system with linear viscosity:
\begin{equation}\label{QHD-L}
	\begin{cases}
		\rho_t+m_x=0,\\
		m_t+\left(\displaystyle\frac{m^2}{\rho}+p(\rho)\right)_x=\mu m_{xx}+k^2\rho\left(\displaystyle\frac{(\sqrt{\rho})_{xx}}{\sqrt{\rho}}\right)_x.
	\end{cases}
\end{equation}
Here, the positive constants $\mu,k>0$ are the viscosity and dispersive coefficients, respectively,
while $\rho\geq0$ is the density, $m=\rho u$ is the momentum, where $u$ stands for the velocity and $p(\rho) = \rho^\gamma$, with $\gamma \geq 1$, is the pressure. The function $(\sqrt{\rho})_{xx}/\sqrt{\rho}$ is known as the (normalized) quantum Bohm potential \cite{Boh52a,Boh52b}, providing the model with a nonlinear third order dispersive term. It can be interpreted as a quantum correction to the classical pressure (stress tensor). The viscosity term, in contrast, is of linear type. The resulting system is used, for instance, in superfluidity \cite{Khlt89}, or in classical hydrodynamical models for semiconductor devices \cite{GarC94}.

Models in QHD represent an equivalent alternative formulation of the Schr\"odinger equation, written in terms of hydrodynamical variables, and structurally similar to the Navier--Stokes equations of fluid dynamics. The first derivation of the QHD equations is due to Madelung \cite{Mdlng27}, during the early times of quantum mechanics and it was a precursor of the de Broglie--Bohm causal interpretation of quantum theory \cite{Boh52a,Boh52b,BHK87}. Since then, quantum fluid models have been applied to describe many physical phenomena, such as the modeling of quantum semiconductors \cite{FrZh93,GarC94}, the dynamics of Bose--Einstein condensates \cite{DGPS99,GrantJ73} and the mathematical description of superfluidity \cite{Khlt89,Land41}, among others. The analysis of QHD models have recently attracted the attention of researchers working in analysis and PDE because of their complexity, relevance in physics and the underlying mathematical challenges; for an abridged list of references, see \cite{AnMa09,DMR05,GJV09,JuMi07} and the works cited therein. An important field of study pertains to the emergence of shock waves dominated by dispersion rather than disspation, known as \emph{dispersive shocks}. In the context of QHD models, purely dispersive shocks were first studied in \cite{GuPi74,Sgdv64} (see also \cite{Gas01,HACCES06,HoAb07} for later developments). Motivated by classical fluid theory, a new approach focuses on the interaction between dispersion and viscosity (see, e.g., \cite{DiMu17,GMOB22,Zhel-preprint}), where viscous-dispersive shocks play a preponderant role.

In recent works, the third author and collaborators have studied the existence and stability properties of viscous-dispersive shock profile solutions to system \eqref{QHD-L} with linear viscosity (cf. \cite{LMZ20a,LMZ20b}) and to a different system with nonlinear viscosity coefficient as well (see \cite{LaZ21b,LaZ21a}). The authors pay attention to the interplay of the dispersive and the viscosity terms present in \eqref{QHD-L}. For instance, it is shown in \cite{LMZ20a} that dispersive profiles solutions to \eqref{QHD-L}, connecting a Lax shock for the underlying Euler system in the absence of viscosity and dispersion, do exist. In the small amplitude (or weak shock) regime, that is, when the end states are sufficiently close to each other, profiles are monotone. The case of large amplitude shocks, in contrast, leads to a global analysis of the associated ODE system and to the existence of oscillatory profiles. In the latter case, the dispersion coefficient plays a more significant role (even though, in their analysis, it remains dominated by the viscosity). For both (small- and large-amplitude) regimes, the authors in \cite{LMZ20a} show that the essential spectrum of the linearized operator around the wave is stable or, in other words, that remains in the stable half plane of complex numbers with non-positive real part, provided that the shocks are subsonic or sonic. Moreover, using Evans function techniques, the authors provide some spectral bounds for the absolute value of the point spectrum. Such bounds allow, in turn, to perform some numerical calculations that yield evidence of the stability of the point spectrum as well (see \cite{LMZ20b}). As a result of their findings, Lattanzio \emph{et al.} conjecture that the viscous-dispersive shocks profile solutions to \eqref{QHD-L}, whose existence is proved in \cite{LMZ20a}, are spectrally stable. 


The purpose of this paper is, therefore, very concrete: to provide an analytical proof that dispersive shock profiles for system \eqref{QHD-L} are spectrally stable in the small-amplitude regime. For that purpose, we generalize previous stability results based on energy estimates in the context of viscous \cite{MN85,BHRZ08} and of viscous-capillar fluids \cite{Hu09,ZLY16}. The first part of this paper is devoted to show that, in the small-amplitude case, profiles for system \eqref{QHD-L} share important features with purely-viscous shock profiles, namely, they are monotone and exponentially decaying. These features allow, in turn, to perform energy estimates on the spectral equations. We formulate the spectral problem for the perturbation in terms of integrated variables, which provide better energy estimates (see \cite{Go86,Go91}). Our proof extends the energy estimate performed by Humpherys \cite{Hu09} for viscous fluids with constant capillarity coefficient, by introducing a novel weighted energy that controls the nonlinear (Bohmian) dispersive term. The result applies to subsonic shocks with sufficiently small amplitude, proving in this way the conjecture of \cite{LMZ20a,LMZ20b} for that case.


The paper is structured as follows. Section \ref{sec:structure} contains a description of the viscous-dispersive shocks, as well as the proof of further properties of small-amplitude profiles which are needed in the stability analysis. In section \ref{secspectprob} we pose the spectral problem and recall previous results. The central section \ref{sec:main} contains the main result of the paper: we derive energy estimates on the point spectral problem, which profit from the integrated formulation and the monotonicity and asymptotic behavior of the small-amplitude profiles. These estimates imply the spectral stability of the shocks. We finish the paper with a discussion on open problems.

\subsection*{On notation} 

We denote the real and imaginary parts of a complex number $\lambda \in \C$ by $\Re\lambda$ and $\Im\lambda$, respectively, as well as complex conjugation by ${\lambda}^*$. Complex transposition of vectors or matrices is indicated by the symbol $A^*$, whereas simple transposition is denoted by the symbol $A^\top$. Standard Sobolev spaces of complex-valued functions on the real line will be denoted as $L^2(\R)$ and $H^m(\R)$, with $m \in \N$, endowed with the standard inner products and norms. Linear operators acting on infinite-dimensional spaces are indicated with calligraphic letters (e.g., $\cL$ and $\cI$).

\section{Structure of small-amplitude dispersive shocks}
\label{sec:structure}

The goal of this section is to prove some important properties of the small dispersive shocks profiles for system \eqref{QHD-L}. We invoke the existence theory from previous works (cf. \cite{LMZ20a,Zhel-preprint}) and prove some additional useful features which will be needed in the stability analysis, such as monotonicity and exponential decay. 

First, let us recall some definitions and basic facts concerning system \eqref{QHD-L}, 
with $\mu,k>0$ and $p(\rho)=\rho^\gamma$, for some $\gamma\geq1$.
The associated Euler system related to \eqref{QHD-L} (that is, when viscosity and dispersion parameters are set to zero) is
\begin{equation}\label{eq:Euler}
	\begin{cases}
		\rho_t+m_x=0,\\
		m_t+\left(\displaystyle\frac{m^2}{\rho}+\rho^\gamma\right)_x=0,
	\end{cases}
\end{equation}
and it can be recast in conservative form, $U_t+F(U)_x=0$, where $U=(\rho,m)^\top$ and 
$$F(U)=F(\rho,m)=\left(m,\displaystyle\frac{m^2}{\rho}+\rho^\gamma\right)^{\top}.$$
A direct computation shows that the Jacobian of $F$ is given by
$$DF(\rho,m)=\begin{pmatrix}
0 & 1\\
-m^2/\rho^2+\gamma\rho^{\gamma-1} & 2m/\rho
\end{pmatrix},$$
and its eigenvalues (characteristic speeds) are
$$\lambda_\pm(\rho,m)=\frac{m}{\rho}\pm\sqrt{\gamma\rho^{\gamma-1}}=:u\pm c_s(\rho),$$
where we introduced the sound speed 
\begin{equation}
\label{eq:sound-speed}
	c_s(\rho):=\sqrt{\gamma\rho^{\gamma-1}}.
\end{equation}
A shock wave with end states $\rho^\pm$, $m^\pm$ and shock speed $s$ satisfies the Rankine--Hugoniot conditions $s(U^+-U^-)=F(U^+)-F(U^-)$, namely,
\begin{align}
	s(\rho^+-\rho^-)&=m^+-m^-, \label{eq:RH1}\\
	s(m^+-m^-)&=\left(\displaystyle\frac{m^2}{\rho}+\rho^\gamma\right)^+-\left(\displaystyle\frac{m^2}{\rho}+\rho^\gamma\right)^-. \label{eq:RH2}
\end{align}
Let us consider a traveling wave profile for system \eqref{QHD-L} of the form
$$\rho(x,t)=P(x-st), \qquad m(x,t)=J(x-st),$$
with speed $s\in\R$ and its limiting end states 
$$\lim_{y\to\pm\infty} P(y)=P^\pm, \qquad \lim_{y\to\pm\infty} J(y)=J^\pm,$$
satisfying the Rankine--Hugoniot conditions \eqref{eq:RH1} and \eqref{eq:RH2}.
In order to write the ODEs for the pair $(P,J)$, we recall that the Bohm potential can be rewritten in the conservative form as
$$\rho\left(\displaystyle\frac{(\sqrt{\rho})_{xx}}{\sqrt{\rho}}\right)_x=\frac12\big(\rho(\log\rho)_{xx}\big)_x.$$
Substituting the profiles $P,J$ into \eqref{QHD-L}, we derive the following system of ODEs
\begin{align}
	-sP'+J'&=0, \label{eq:ODE1}\\
	-sJ'+\left(\displaystyle\frac{J^2}P+P^\gamma\right)'&=\mu J''+\frac{k^2}{2}\big(P(\log P)''\big)', \label{eq:ODE2}
\end{align}
where $P:=P(y)$, $J:=J(y)$ and $' := d/dy$, where $y$ denotes the Galilean variable of translation, $y = x-st$.
By integrating \eqref{eq:ODE1} in $(-\infty,y)$, we obtain
$$J(y)-J^-=sP(y)-sP^-, \qquad \forall\, y\in\R.$$
Likewise, integrating in $(y,\infty)$ yields
$$J^+-J(y)=sP^+-sP(y), \qquad \forall\, y\in\R.$$
As a consequence, it follows that
\begin{equation}
\label{eq:J-P}
	J(y)=sP(y)-A,
\end{equation}
where 
\begin{equation}
\label{expr_A}
	A:=sP^{\pm}-J^{\pm}, 
\end{equation}
as follows from the Rankine--Hugoniot condition \eqref{eq:RH1}.
Substituting \eqref{eq:J-P} into \eqref{eq:ODE2} and proceeding as in \cite{LMZ20a} (see section 2), we finally conclude that the profile $P$ satisfies
\begin{equation}\label{2Dsys}
	P''=\frac{2}{k^2} f(P) - \frac{2 s \mu}{k^2} P' + \frac{P'^2}{P},
\end{equation}
where
\begin{equation*}
	f(P)=P^{\gamma}-(As+B)+\frac{A^2}{P},
\end{equation*}
with $A$ defined in \eqref{expr_A} and
\begin{equation}\label{expr_B}
	B:=-s J^{\pm}+\Big{(}\frac{J^2}{P}+P^{\gamma}\Big{)}^{\pm}.
\end{equation}
The constants $A,B$ in $f(P)$ can be expressed in terms of $P^{\pm}$ to obtain
\begin{equation}\label{f_roots0}
	f(P)= P^\gamma +\frac{P^-P^+}{P}\frac{(P^+)^\gamma-(P^-)^\gamma}{P^+-P^-} -\frac{(P^+)^{\gamma+1}-(P^-)^{\gamma+1}}{P^+-P^-}. 
\end{equation}
Note that $P^\pm$ are zeros of $f(P)$; this fact can be verified by using the relations \eqref{expr_A} and \eqref{expr_B} in the definition of $f$. 
Moreover, $f''(P) > 0$ for $P > 0$ and therefore $P^{\pm}$ are the only positive zeros of $f$.

\subsection{Monotonicity}

As it was previously mentioned, the existence of profiles satisfying \eqref{2Dsys} with end states $P^\pm$ is studied in detail in \cite{LMZ20a,Zhel-preprint}. In particular, it is well known that if the amplitude $|P^+-P^-|$ is sufficiently small, then a heteroclinic solution exists (cfr. Lemma 1 in \cite{LMZ20a}).  The next goal is to prove that the profile $P$ is monotone if the amplitude is sufficiently small (see also \cite{Hu09} for a related discussion in the context of compressible fluids).

\begin{lem}\label{lem:P-prop}
Let $P^->0$.
There exists $\varepsilon_0>0$ such that:
\begin{description}
\item[(i)] If $s>0$ and $0<P^--P^+<\varepsilon_0$, then there exists a heteroclinic trajectory $P$ of \eqref{2Dsys} connecting $[P^-, 0]$ to $[P^+, 0]$, which is monotone decreasing with $P'(y)<0$, for any $y\in\R$.
\item[(ii)] If $s<0$ and $0<P^+-P^-<\varepsilon_0$ then there exists a heteroclinic trajectory $P$ of \eqref{2Dsys} connecting $[P^-, 0]$ to $[P^+, 0]$, which is monotone increasing with $P'(y)>0$, for any $y\in\R$.
\end{description}
In both cases \textbf{\emph{(i)}} and \textbf{\emph{(ii)}}, we have 
\begin{equation}\label{eq:prop-P}
	|P'(y)|\leq C\e^2, \qquad \mbox{ and } \qquad |P''(y)|\leq C\e|P'(y)|, \qquad \; \forall\,y\in\R.
\end{equation}
\end{lem}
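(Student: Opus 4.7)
The proof plan centers on the substitution $\phi(y) := \log P(y)$, which eliminates the singular quadratic term $(P')^2/P$ in \eqref{2Dsys} and produces the damped second-order equation
\begin{equation*}
\phi'' + a\phi' = g(\phi), \qquad a := \frac{2s\mu}{k^2}, \qquad g(\phi) := \frac{2 f(e^\phi)}{k^2 e^\phi}.
\end{equation*}
The zeros and strict convexity of $f$ transfer to $g$: $g(\phi^\pm) = 0$ with $\phi^\pm := \log P^\pm$, and $g<0$ strictly between these points. A Taylor expansion around the midpoint $\bar\phi := (\phi^+ + \phi^-)/2$, combined with the vanishing conditions $g(\phi^\pm)=0$, yields $\|g\|_{L^\infty([\phi^+,\phi^-])} \le C\varepsilon^2$ and $\|g'\|_{L^\infty([\phi^+,\phi^-])} \le C\varepsilon$ for small $\varepsilon$. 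Monotonicity in case (i) then follows by a phase-plane contradiction: the linearization at $(\phi^-, 0)$ is a saddle with slow unstable eigenvalue $\nu_+^{(-)} = g'(\phi^-)/a + O(\varepsilon^2) > 0$, so the heteroclinic leaves $(\phi^-, 0)$ with $\phi'<0$; if $\phi'(y^\star)=0$ at some first finite $y^\star$ with $\phi(y^\star)\in(\phi^+, \phi^-)$, the ODE forces $\phi''(y^\star) = g(\phi(y^\star)) < 0$, contradicting the fact that $\phi'$ must be nondecreasing as it reaches zero from below. Hence $\phi'<0$ and $P' = P\phi' < 0$ throughout; case (ii) is symmetric, with the sign of $a$ reversed and the Duhamel integration below running from $+\infty$.

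For the bound $|P'(y)| \le C\varepsilon^2$, I would view $\psi := \phi'$ as the solution of the linear equation $\psi' + a\psi = g(\phi(y))$ with $\psi(-\infty)=0$, so that Duhamel's formula gives
\begin{equation*}
\phi'(y) = \int_{-\infty}^y e^{-a(y-\tau)} g(\phi(\tau))\, d\tau,
\end{equation*}
and hence $\|\phi'\|_\infty \le a^{-1}\|g\|_\infty \le C\varepsilon^2$. Since $P(y) \in [P^+, P^-]$ is bounded away from zero and infinity, the first bound follows from $|P'(y)| = P(y)|\phi'(y)| \le C\varepsilon^2$.

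The ratio bound $|P''(y)| \le C\varepsilon|P'(y)|$ is the main obstacle. Using $P'' = P\phi'' + P(\phi')^2$, it reduces to showing the pointwise estimate $|\phi''(y)| \le C\varepsilon|\phi'(y)|$, since the quadratic contribution $(\phi')^2/|\phi'| = |\phi'| = O(\varepsilon^2)$ is already of the desired order. The strategy is to exploit the slow-manifold structure $\phi' \approx g(\phi)/a$: introducing the deviation $\eta := g(\phi)/a - \phi'$, differentiation produces $\eta' + a\eta = g'(\phi)\phi'/a$, and a second Duhamel bound using $\|g'\|_\infty = O(\varepsilon)$ and $\|\phi'\|_\infty = O(\varepsilon^2)$ yields $\|\eta\|_\infty \le C\varepsilon^3$; since $\phi'' = g(\phi) - a\phi' = a\eta$, one obtains the global bound $\|\phi''\|_\infty \le C\varepsilon^3$. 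The technical crux is converting this into a pointwise ratio: in the interior region where $\phi(y)$ stays a fixed fraction of $\varepsilon$ away from both equilibria, $|g(\phi(y))| \ge c\varepsilon^2$ forces $|\phi'(y)| \ge c\varepsilon^2/(2a)$, so $|\phi''|/|\phi'| = O(\varepsilon)$; near each equilibrium the direct linearization gives $\phi''/\phi' \to \nu^{(\pm)}_+ = O(\varepsilon)$. Stitching these two regimes into a single uniform pointwise estimate, valid for all $y\in\R$, is the principal remaining difficulty of the proof.
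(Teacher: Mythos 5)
Your substitution $\phi=\log P$ is correct and does cleanly remove the quadratic term, and the bounds $\|g\|_{L^\infty([\phi^+,\phi^-])}\le C\e^2$, $\|g'\|_{L^\infty([\phi^+,\phi^-])}\le C\e$ (from $f(P^\pm)=0$ and $|P^--P^+|=\e$) together with the Duhamel representation of $\phi'$ do yield $|P'|\le C\e^2$ \emph{once monotonicity is known}, so that the range of $\phi$ is confined to $[\phi^+,\phi^-]$. The approach is genuinely different from the paper's, which rescales $P=\e R+\tfrac12(P^++P^-)$, $z=\e y$ and invokes Fenichel's slow-manifold theorem to reduce to the scalar equation $R_z=c(R^2-\tfrac14)+\e h_1(R,\e)$; both conclusions of \eqref{eq:prop-P} then fall out of the exact graph representation $R_z=h^\e(R)$ by differentiating a scalar autonomous ODE. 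Your argument, however, has two genuine gaps.

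First, the monotonicity step is incomplete. Your phase-plane contradiction only excludes a first zero of $\phi'$ occurring at a point where $\phi(y^\star)\in(\phi^+,\phi^-)$, where $g<0$. It does not exclude the trajectory crossing $\phi=\phi^+$ with $\phi'<0$ and having its first critical point at some $\phi(y^\star)<\phi^+$, where $g>0$ and no contradiction arises; this undershoot-and-return scenario is exactly the mechanism that produces the oscillatory (non-monotone) profiles known to occur at larger amplitudes. Tellingly, your monotonicity argument never uses smallness of $\e$, so if it were complete it would prove monotonicity for all amplitudes, which is false. Ruling out the undershoot is where the singular-perturbation structure must enter (in the paper, via confinement of the connection to the slow manifold, on which the dynamics is scalar and hence monotone between consecutive equilibria); alternatively you would need a quantitative trapping estimate such as $|\phi'|\le g(\phi)/a+O(\e^3)$ forcing $\phi'\to 0$ before $\phi$ reaches $\phi^+$. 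Note also that your Duhamel bounds for $\phi'$ and for $\eta$ use the sup of $g$ and $g'$ over the range of $\phi$, so they are logically downstream of this confinement and cannot be used to establish it without care about circularity.

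Second, as you yourself flag, the pointwise ratio bound $|P''|\le C\e|P'|$ is not actually proved. The global estimates $\|\phi''\|_\infty=\|a\eta\|_\infty\le C\e^3$ and $\|\phi'\|_\infty\le C\e^2$ give $|\phi''|\le C\e\cdot\|\phi'\|_\infty$, not $|\phi''(y)|\le C\e|\phi'(y)|$; the two differ precisely in the tails, where $\phi'(y)\to 0$. Your interior estimate ($|g(\phi)|\ge c\e^2$ forces $|\phi'|\ge c'\e^2$) is fine, and the limits $\phi''/\phi'\to\nu_+^{(\pm)}=O(\e)$ at $y=\pm\infty$ are correct, but a limit at infinity plus an interior bound do not produce a single constant $C$, uniform in $\e$, valid on the transition regions in between. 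This is not a cosmetic issue: it is the reason the paper works with the exact slow-manifold graph, for which $P''=\e P'\bigl(2cR+\e\,\partial_R h_1\bigr)$ holds pointwise with a uniformly bounded bracket. To finish along your lines you would need a further pointwise (weighted) Duhamel iteration showing $|\eta'(y)|\le C\e|\phi'(y)|$ for all $y$, which is the substantive content you have deferred.
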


\begin{proof}
Let us start with \textbf{(i)}; 
in this case we have $s>0$ and assume that the amplitude of the shock, 
\begin{equation}\label{expr_Pp}
	\e := P^- - P^+ > 0, 
\end{equation}
satisfies $\varepsilon\in(0,\e_0)$. The equation $f'(P) = 0$ has a unique positive root,
\begin{equation}\label{expr_P0}
	P_0 = \Big{(} \frac{P^- P^+}{\gamma} \frac{(P^+)^\gamma-(P^-)^\gamma}{P^+-P^-} \Big{)}^\frac{1}{\gamma + 1},
\end{equation}
with the following  expansion 
\begin{equation}\label{expansion_P0}
	P_0 = P^- - \frac{\varepsilon}{2} + \mathcal{O}(\varepsilon^2). 
\end{equation}
Let us expand $f(P)$ around $P_0$:
\begin{equation}\label{expansion_f}
	f(P) = f(P_0) + f'(P_0)(P-P_0) + \frac{1}{2} f''(P_0)(P-P_0)^2 + \mathcal{O}(|P-P_0|^3), 
\end{equation}
and consider the change of variable,
\begin{equation}\label{changevar}
	P = \varepsilon R + \frac{P^+ + P^-}{2}.
\end{equation}
The value $P = P^+$ corresponds to $R = -\frac{1}{2}$ and $P = P^-$ corresponds to $R = \frac{1}{2}$. 
Therefore, the interval $[P^+, P^-]$ corresponds to $[-\tfrac{1}{2},\tfrac{1}{2}]$ in the variable $R$ and it is independent of $\varepsilon$.
Using \eqref{expr_Pp} and \eqref{changevar} we get
\begin{equation}\label{expr_P}
	P = \varepsilon R + P^- - \frac{\varepsilon}{2}.
\end{equation} 
Now, let us make a change of the independent variable $z = \varepsilon y$ and substitute \eqref{expr_P} into \eqref{2Dsys}. This yields
\begin{equation}\label{2Dsys_R}
	\varepsilon^3 R_{zz} = \frac{2}{k^2} f\Big{(}\varepsilon R + P^- - \frac{\varepsilon}{2}\Big{)} - \frac{2 s \mu}{k^2}\varepsilon^2 R_z + \frac{\varepsilon^4}{\varepsilon R + P^- -\frac{\varepsilon}{2}}(R_z)^2.
\end{equation}
Now, let us consider the expansion \eqref{expansion_f}: 
by using \eqref{expr_Pp} and \eqref{expr_P0}, we deduce
\begin{align}
	f(P_0) &= -\frac{\gamma (\gamma + 1)}{8} (P^-)^{\gamma - 2} \varepsilon^2 + \mathcal{O}(\varepsilon^3), \label{expr_f}\\
	f'(P_0) &= 0, \label{expr_df}\\
	f''(P_0) &= \gamma (\gamma + 1) (P^-)^{\gamma - 2} + \mathcal{O}(\varepsilon). \label{expr_d2f}
\end{align}
Using \eqref{expr_P} and \eqref{expansion_P0},  we get 
\begin{equation}
	P - P_0 = \varepsilon R+ \mathcal{O}(\varepsilon^2).  \label{expr_pmp0}
\end{equation}
Substituting \eqref{expr_f}, \eqref{expr_df}, \eqref{expr_d2f} and \eqref{expr_pmp0} into \eqref{expansion_f}, we end up with
\begin{equation}
	f(P) = -\frac{\gamma (\gamma + 1)}{8} (P^-)^{\gamma - 2} \varepsilon^2 + \frac{\gamma (\gamma + 1)}{2} (P^-)^{\gamma - 2} \varepsilon^2 R^2 + \mathcal{O}(\varepsilon^3). \label{expansion_f_R}
\end{equation}
Finally, using the expansion \eqref{expansion_f_R} in \eqref{2Dsys_R}, we obtain the leading order equation
\begin{equation}
	R_z = c \Big{(} R^2 - \frac{1}{4} \Big{)}, \label{redued_eq}
\end{equation}
where
\begin{equation*}
	c = \frac{\gamma (\gamma + 1) (P^-)^{\gamma - 2}}{2 s \mu} > 0.
\end{equation*}
Equation \eqref{redued_eq} has solutions converging to $\mp 1/2$ as $y \rightarrow \pm \infty$ with $R_z< 0$ that are contained in  $(-1/2,1/2)$.
	
Let $Q = R_z$. The slow manifold (see \cite{J95}) is given by
\begin{equation*}
	M_{\varepsilon} = \left\{(Q,R) \in \R^2 \, : \, Q = h^{\varepsilon}(R) \right\},
\end{equation*}
where
\begin{equation}
	h^{\varepsilon}(R) = h_0(R) + \varepsilon h_1(R,\varepsilon),
\end{equation}
	with
\begin{equation*}
	h_0(R) = c\Big{(} R^2 - \frac{1}{4} \Big{)}.
\end{equation*}
The equation on the slow manifold is
\begin{equation}
	R_z= c \Big{(} R^2 - \frac{1}{4} \Big{)} + \varepsilon h_1(R, \varepsilon). \label{eq_slow_manifold}
\end{equation}
For sufficiently small $\varepsilon > 0$, the term $\varepsilon h_1(R, \varepsilon)$ does not affect the monotonicity of the solutions of \eqref{eq_slow_manifold}, which converge to $\mp 1/2$ as $z \rightarrow \pm \infty$. Moreover, these solutions are strictly decreasing. Then we have
\begin{equation} \label{changevar_y}
P(y) = \varepsilon R(\varepsilon y) + \frac{P^+ + P^-}{2}, \qquad y\in\R.
\end{equation}
Direct differentiation of \eqref{changevar_y} with respect to $y$ gives
\begin{equation}\label{eq_dP}
	P'(y) = \varepsilon^2 R'(\varepsilon y),
\end{equation}
and, since $|R'(z)| \leq C$, where $C$ does not depend on $\varepsilon$, we obtain the first inequality in \eqref{eq:prop-P}.
Moreover, direct differentiation of \eqref{eq_dP} with respect to $y$ and equation \eqref{eq_slow_manifold} yield
\begin{align*}
	P''(y) &= \varepsilon^2 \frac{d}{dy}\big(R'(\varepsilon y)\big) = 
	\varepsilon^2 \frac{d}{dy}\left( c \Big( R(\varepsilon y)^2 - \frac{1}{4} \Big) + \varepsilon h_1(R(\varepsilon y), \varepsilon)\right) \\
	&= \varepsilon^3 R'(\varepsilon y) \left( 2 c R(\varepsilon y)  + \varepsilon \frac{\partial h_1}{\partial R}(R(\varepsilon y), \varepsilon)\right)\\
	&=\e P'(y)\left(2 c R(\varepsilon y)  + \varepsilon \frac{\partial h_1}{\partial R}(R(\varepsilon y), \varepsilon)\right).
\end{align*}
Since
\begin{equation*}
	\left| 2 c R + \varepsilon \frac{\partial h_1}{\partial R} \right| \leq C_2,
\end{equation*}
where the constant $C_2 > 0$ does not depend on $\varepsilon$, we obtain the second inequality in \eqref{eq:prop-P}.

We now prove \textbf{(ii)}. Recall that we have $s<0$, $P^- < P^+$ and $\e := P^+ - P^-$ is the amplitude of the shock. Let us make the change of variables $\tilde{y} = - y$ and $\tilde{P}(\tilde{y}) = P(y)$ in \eqref{2Dsys}. Denoting $' = d/d\tilde{y}$, we obtain the equation
\begin{equation*}
\tilde{P}''=\frac{2}{k^2} f(\tilde{P}) - \frac{2 \tilde{s} \mu}{k^2} \tilde{P}' + \frac{\tilde{P}'^2}{\tilde{P}},
\end{equation*}
where $\tilde{s} = -s$, which has the same form as equation \eqref{2Dsys}. Let $\tilde{P}^+ = P^-$ and $\tilde{P}^- = P^+$. Hence, we repeat the steps in case \textbf{(i)} (with the difference that we now expand around $\tilde{P}^+$) in order obtain the reduced equation
\begin{equation}
	R_{\tilde z}= c \Big{(} R^2 - \frac{1}{4} \Big{)}, \qquad \mbox{ with } \quad c:=\frac{\gamma (\gamma + 1) (\tilde{P}^+)^{\gamma - 2}}{2 \tilde{s} \mu} > 0,
\end{equation}
where $\tilde z=\e\tilde y$.
The conclusion follows exactly as in case \textbf{(i)}.
\end{proof}

As we will see in Section \ref{sec:main}, the monotonicity of the profile $P$ is crucial in the proof of spectral stability for small amplitude dispersive shock profiles.
From Lemma \ref{lem:P-prop}, it is clear that we can consider two different cases:
either $s>0$ and a monotone decreasing profile $P$, or $s<0$ and a strictly increasing $P$. For simplicity in the exposition, from this point on and for the rest of the paper we consider only the case $s > 0$. Our results can be easily extended to case $s<0$ at the expense of extra bookkeeping.

\subsection{Asymptotic behavior}
\label{secasympb}

Let us examine some properties of the asymptotic decay of the profiles. To start with, let us express the constant $A$ appearing in \eqref{eq:J-P} and in \eqref{expr_A} as a function of $P^+$ and $P^-$:
by proceeding as in \cite{LMZ20a} and using the Rankine--Hugoniot conditions \eqref{eq:RH1} and \eqref{eq:RH2}, we deduce the following quadratic equation
$$(J^-)^2-2sP^-J^-+\frac{P^-P^+\left[(P^+)^{\gamma}-(P^-)^{\gamma}-P^-s^2+(P^-)^2s^2/P^+\right]}{P^--P^+}=0,$$
which has exactly two solutions
$$J^-_{1,2}=sP^-\pm\sqrt{\frac{P^-P^+\left[(P^-)^\gamma-(P^+)^\gamma\right]}{P^--P^+}}.$$

It has been proved in \cite{Zhel-preprint} that the solution $J^-_1$ should not be considered, 
because the corresponding shock $(P^\pm,J^\pm_1,s)$ is not an admissible Lax shock for \eqref{eq:Euler}, 
while $(P^\pm,J^\pm_2,s)$ defines a Lax 2--shock for that system. 
As a consequence, the definition \eqref{expr_A} yields
\begin{equation*}
	A=\sqrt{P^- P^+}\sqrt{\frac{(P^-)^{\gamma}-(P^+)^{\gamma}}{P^--P^+}}>0.
\end{equation*}
If we choose $P^+ = P^- - \varepsilon$ with $0 < \varepsilon < P^-$, we have
\begin{equation}\label{expr_A_P}
	A = \sqrt{P^-(P^- - \varepsilon)}\sqrt{\frac{(P^-)^{\gamma}-(P^- - \varepsilon)^{\gamma}}{\varepsilon}}. 
\end{equation}
We deduce that there exists $\varepsilon_0 > 0$ such that the following expansion holds true
\begin{equation}
	A = P^- c_s(P^-) - \frac{\gamma + 1}{4} c_s(P^-) \varepsilon + \mathcal{O}(\varepsilon^2), \label{expansion_A}
\end{equation}
for $0 < \varepsilon < \varepsilon_0$, where the sound speed \eqref{eq:sound-speed} at the left state is $c_s(P^-) = \sqrt{\gamma (P^-)^{\gamma - 1}}$. 
Let us define the functions 
\begin{align}
	f_1(y)&:=\left(s-\frac{A}{P(y)}\right)^2-\gamma P(y)^{\gamma-1}, \label{eq:f1} \\
	f_2(y)&:=-s+2\frac{A}{P(y)}. \label{eq:f2}
\end{align}

As a consequence of Lemma \ref{lem:P-prop} and the expansion \eqref{expansion_A}, we now prove the following lemma containing some useful properties of the functions $f_1$ and $f_2$.

\begin{lem}\label{lem:f_i}
Suppose $P^- > 0$ and $s\in(0,\bar s)$, where
\begin{equation}
\label{defbars}
\bar s:=\min\left\{2c_s(P^-),\left(\frac{\gamma+1}{2}\right) c_s(P^-)\right\}.
\end{equation}
There exists $\e_1>0$ such that if the shock amplitude
\begin{equation}
\label{eq:small-amplitude}
	\e = P^--P^+,
\end{equation} 
satisfies $\e\in(0,\e_1)$ then there exist uniform constants $c_j > 0$ (independent of $\e$) such that 
\begin{subequations}\label{allproperties}
\begin{align}
-c_1^{-1} &\leq f_1(y) \leq - c_1 < 0, &
  c_2 &\geq f_2(y)\geq c_2^{-1} > 0, \label{eq:f_i-sign}\\
c_3^{-1}|P'(y)| &\leq f'_1(y) \leq c_3|P'(y)|,&
c_3^{-1}|P'(y)| &\leq f'_2(y)\leq c_3|P'(y)|, \label{eq:f'_i-order}\\
  |f''_1(y)|&\leq c_4\e|P'(y)|, &
  |f''_2(y)|&\leq c_4\e|P'(y)|, \label{eq:f''_i-order}
\end{align}
\end{subequations}
for all $y\in\R$, where the functions $f_1$ and $f_2$ are defined in \eqref{eq:f1} and \eqref{eq:f2}, respectively.
\end{lem}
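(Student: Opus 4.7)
The plan is to treat each of the six bounds in \eqref{allproperties} by expanding in the amplitude $\varepsilon$, exploiting two facts: the asymptotic expansion \eqref{expansion_A} for $A$, and the observation that by Lemma \ref{lem:P-prop} the profile satisfies $P(y)\in[P^+,P^-]=[P^--\varepsilon,P^-]$ uniformly in $y$. Consequently $A/P(y)=c_s(P^-)+\mathcal{O}(\varepsilon)$ and $\gamma P(y)^{\gamma-1}=c_s(P^-)^2+\mathcal{O}(\varepsilon)$, with the $\mathcal{O}$ terms uniform in $y\in\R$. All six bounds will then follow from this single observation combined with the derivative bounds $|P'|\le C\varepsilon^2$ and $|P''|\le C\varepsilon|P'|$ from Lemma \ref{lem:P-prop}.

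For the sign estimates \eqref{eq:f_i-sign}, direct substitution into the definitions \eqref{eq:f1}, \eqref{eq:f2} yields
\begin{equation*}
f_1(y)=s\bigl(s-2c_s(P^-)\bigr)+\mathcal{O}(\varepsilon),\qquad f_2(y)=2c_s(P^-)-s+\mathcal{O}(\varepsilon),
\end{equation*}
uniformly in $y$. The first condition $s<2c_s(P^-)$ in the definition \eqref{defbars} of $\bar{s}$ makes the leading term of $f_1$ strictly negative and that of $f_2$ strictly positive, yielding both the upper and lower bounds for all $\varepsilon<\e_1$ with $\e_1$ small enough.

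For the first-derivative estimates \eqref{eq:f'_i-order}, I would differentiate directly: $f_2'(y)=-\tfrac{2A}{P(y)^2}\,P'(y)$ factors cleanly, with prefactor bounded above and below by positive constants uniform in $y$ and $\varepsilon$, so the bound follows at once (using $P'<0$ to drop the absolute value). For $f_1'$, factoring out $P'$ leaves a bracket $g(P)=2As/P^2-2A^2/P^3-\gamma(\gamma-1)P^{\gamma-2}$ whose leading order simplifies, via $c_s(P^-)^2=\gamma(P^-)^{\gamma-1}$, to $\tfrac{2c_s(P^-)}{P^-}\bigl[s-\tfrac{\gamma+1}{2}c_s(P^-)\bigr]+\mathcal{O}(\varepsilon)$. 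This is the crucial place where the \emph{second} condition $s<\tfrac{\gamma+1}{2}c_s(P^-)$ in \eqref{defbars} enters: it makes the bracket uniformly strictly negative, so $f_1'=P'\cdot g$ is positive and comparable to $|P'|$.

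For the second-derivative estimates \eqref{eq:f''_i-order}, differentiating once more decomposes each $f_i''$ into a $P''$--term and a $(P')^2$--term (times $P$--dependent coefficients that are uniformly bounded for $P\in[P^+,P^-]$). By Lemma \ref{lem:P-prop} the former is $\mathcal{O}(\varepsilon|P'|)$ and the latter is $\mathcal{O}(\varepsilon^2|P'|)$ because $|P'|\le C\varepsilon^2$, so both are absorbed into $\mathcal{O}(\varepsilon|P'|)$. The main delicate point throughout is the identification of the two threshold conditions on $s$ that together force the definition of $\bar{s}$ as a minimum: one (controlling the signs of $f_1$ and $f_2$) giving $s<2c_s(P^-)$, and the other (controlling the leading order of $f_1'$) giving $s<\tfrac{\gamma+1}{2}c_s(P^-)$. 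Everything beyond tracking these two thresholds reduces to routine but careful bookkeeping of Taylor expansions.
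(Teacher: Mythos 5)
Your proposal is correct and follows essentially the same route as the paper's proof: expand $A$ and $P$ around the left state, observe that $s<2c_s(P^-)$ controls the signs of $f_1$ and $f_2$ while $s<\tfrac{\gamma+1}{2}c_s(P^-)$ controls the leading order of $\partial_P f_1$ (the paper reaches the same bracket $\tfrac{c_s(P^-)}{P^-}[2s-(\gamma+1)c_s(P^-)]$ via the auxiliary functions $F_i(A,P)$), and dispatch the second derivatives with $|P''|\leq C\e|P'|$ and $|P'|\leq C\e^2$. The only presentational difference is that the paper packages the computations as evaluations of $F_i$ and $\partial_P F_i$ at $(A^-,P^-)$, which is the same calculation you carry out directly.
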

\begin{proof}
By using the expansion \eqref{expansion_A}, one has
$$A=P^-c_s(P^-)+O(\e):=A^-+O(\e).$$
By defining
$$F_1(A,P):=\left(s-\frac{A}{P}\right)^2-\gamma P^{\gamma-1},$$
we see that
$$F_1(A^-,P^-)=\left(s-c_s(P^-)\right)^2-c_s(P^-)^2=s^2-2sc_s(P^-)<0,$$
if $s\in(0,\bar s)$.
Therefore, if we choose $\e>0$ sufficiently small in \eqref{eq:small-amplitude} then $f_1(y)<-c_1<0$ for all $y\in\R$ and some $c_1 > 0$. Moreover, since $f_1(y)$ has finite limits as $y \to \pm \infty$ and $P \in [P^{+}, P^{-}]$ we also obtain $- c_1^{-1} \leq f_1(y) \leq -c_1$ for some uniform $c_1\in(0,1)$ and all $y \in \R$. This shows the estimate for $f_1$ in \eqref{eq:f_i-sign}.

On the other hand, since $f'_1(y)=\partial_PF_1(A,P(y))P'(y)$, $P'(y)<0$, for any $y\in\R$ we obtain
\begin{align*}
	\partial_PF_1(A^-,P^-)&=2\left(s-\frac{A^-}{P^-}\right)\frac{A^-}{(P^-)^2}-\gamma(\gamma-1)(P^-)^{\gamma-2}\\
	&=2(s-c_s(P^-))\frac{c_s(P^-)}{P^-}-\gamma(\gamma-1) (P^-)^{\gamma-2}\\
	&=\frac{c_s(P^-)}{P^-}\left[2s-2c_s(P^-)-(\gamma-1)c_s(P^-)\right]<0,
\end{align*}
for $s\in(0,\bar s)$, yielding the estimates for $f'_1$ in \eqref{eq:f'_i-order} provided that $\e>0$ is sufficiently small.

Similarly, for the the function $f_2$ we define 
$$F_2(A,P):=-s+2\frac{A}{P},$$
and since $F_2(A^-,P^-)=2c_s(P^-)-s>0$ for $s\in(0,\bar s)$, we have $f_2(y) \geq c_2^{-1}$ for some $c_2 > 0$ and all $y\in\R$ provided that $\e>0$ is small. By a similar argument to that of $f_1$, we also conclude that $c_2 \geq f_2(y) \geq c_2^{-1}$ for all $y \in \R$ with uniform $c_2 > 1$. This shows the second estimate in \eqref{eq:f_i-sign}. 

Moreover, $f'_2(y)=\partial_PF_2(A,P(y))P'(y)$, with $P'(y)<0$, for any $y\in\R$ and
\begin{align*}
	\partial_P F_2(A^-,P^-)&=-2\frac{A^-}{(P^-)^2}=-\frac{2c_s(P^-)}{P^-}<0.
\end{align*}
Consequently, \eqref{eq:f'_i-order} holds if $\e>0$ is small enough. 

Finally, it suffices to apply inequalities \eqref{eq:prop-P} in order to obtain \eqref{eq:f''_i-order}, provided that $\e$ is sufficiently small. The lemma is now proved.
\end{proof}

\begin{corollary}[exponential decay]
\label{corexpdecay}
For sufficiently small shock amplitudes, $\e = P^- - P^+$, there holds
\[
\begin{aligned}
|P - P^\pm|, |J - J^\pm| &\leq \cO(\e) e^{-\theta \e |y|},\\
\left|\frac{d^{j}P}{dy^j}\right|, \left|\frac{d^{j}J}{dy^j}\right| &\leq \cO(\e^{j+1}) e^{-\theta \e |y|},
\end{aligned}
\]
as $|y| \rightarrow \infty$,
for $j = 1,2,3,$ and some uniform $\theta > 0$.
\end{corollary}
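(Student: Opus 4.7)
The plan is to exploit the slow-manifold reduction already carried out in the proof of Lemma \ref{lem:P-prop}. Recall that, in case \textbf{(i)}, the profile admits the representation $P(y) = \e R(\e y) + (P^+ + P^-)/2$ with $z = \e y$, and the function $R = R(z)$ satisfies the reduced equation on the slow manifold
\begin{equation*}
R_z = c\Big(R^2 - \tfrac{1}{4}\Big) + \e\, h_1(R, \e), \qquad c = \frac{\gamma(\gamma+1)(P^-)^{\gamma-2}}{2 s \mu} > 0,
\end{equation*}
together with the boundary conditions $R(-\infty) = 1/2$ and $R(+\infty) = -1/2$. Case \textbf{(ii)} is handled analogously via the reflection $\tilde y = -y$ used in the lemma.

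My first step would be to linearize this scalar equation near its two equilibria. For the leading-order field $c(R^2 - 1/4)$, the linearization at $R = \pm 1/2$ has eigenvalues $\pm c$, so both equilibria are hyperbolic with rates of modulus $c > 0$. For $\e \in (0, \e_0)$, the $\cO(\e)$ perturbation $\e h_1$ shifts equilibria and eigenvalues by $\cO(\e)$, so hyperbolicity persists and the stable/unstable rates remain bounded below by, say, $c/2$. The classical stable/unstable manifold theorem for scalar autonomous ODEs then supplies
\begin{equation*}
\bigl|R(z) - (\mp 1/2)\bigr| \leq C e^{-\theta_0 |z|}, \qquad \text{as } z \to \pm\infty,
\end{equation*}
with a constant $\theta_0 \in (0, c)$ independent of $\e$.

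Next I would propagate this decay to the derivatives by differentiating the slow-manifold equation. Since $R^2 - 1/4 = (R-1/2)(R+1/2)$ is proportional to $|R - (\mp 1/2)|$ near each end state, $|R_z(z)| \leq C e^{-\theta_0 |z|}$; a further differentiation gives $|R_{zz}| \leq C|R_z| \leq C e^{-\theta_0 |z|}$, and a third yields $|R_{zzz}| \leq C e^{-\theta_0 |z|}$, all with constants uniform in $\e$. Returning to the variable $y$ via $\tfrac{d}{dy} = \e \tfrac{d}{dz}$ and $P - P^\pm = \e(R - (\mp 1/2))$, these bounds translate into
\begin{equation*}
|P(y) - P^\pm| \leq C\e\, e^{-\theta \e |y|}, \qquad \left|\frac{d^j P}{dy^j}(y)\right| = \e^{j+1}\bigl|R^{(j)}(\e y)\bigr| \leq C\e^{j+1} e^{-\theta \e |y|},
\end{equation*}
for $j = 1,2,3$, with $\theta := \theta_0$. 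The corresponding bounds for $J$ are immediate from the identity $J(y) = s P(y) - A$ in \eqref{eq:J-P}, which implies $J - J^\pm = s(P - P^\pm)$ and $d^j J/dy^j = s\, d^j P/dy^j$ for every $j$.

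The delicate point is ensuring that the rate $\theta_0$ can be chosen \emph{uniformly} in $\e$, for only then does the final factor $e^{-\theta \e |y|}$ in the original variable reflect the $\e$-dependence cleanly. This rests on the smooth dependence of both the location of the equilibria ($R = \pm 1/2 + \cO(\e)$) and the spectrum of their linearization on the parameter $\e$; since the unperturbed eigenvalues $\pm c$ are bounded away from zero, an $\cO(\e)$ perturbation cannot collapse them, and the stable/unstable decay rates stay uniformly positive as $\e \to 0^+$. The remainder of the argument is routine differentiation along the profile.
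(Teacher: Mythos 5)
Your proposal is correct and follows essentially the same route as the paper: exponential decay from the hyperbolicity of the rest points of the (reduced) profile ODE, conversion to the $y$-variable through the scaling $z = \e y$ and $P = \e R + (P^+ + P^-)/2$, and the bounds for $J$ via the relation $J = sP - A$. The only cosmetic differences are that you obtain the higher derivatives by differentiating the slow-manifold equation while the paper invokes a bootstrap on the profile equations, and that the relevant equilibria of the reduced equation actually sit exactly at $R = \pm 1/2$ (so $h_1(\pm 1/2, \e) = 0$ by construction) rather than being shifted by $\cO(\e)$ --- a harmless imprecision that does not affect your argument.
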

\begin{proof}
The exponential decay follows immediately by standard ODE estimates of heteroclinic connections around hyperbolic end points. The bounds for $P-P^\pm$ and $P'$ follow from \eqref{changevar_y} and \eqref{eq_dP}, whereas the estimates for the subsequent derivatives can be derived by a bootstrapping argument using the profile equations. The bounds for $J$ follow from those for $P$ by the relation \eqref{eq:J-P}.
\end{proof}
%

Another consequence of Lemma \ref{lem:f_i} is the following Corollary, which will be useful later.

\begin{corollary}
\label{corestg}
Let us define the function
\begin{equation}
\label{defogg}
g(y) := - \frac{1}{2} \left[ \frac{f_2(y)}{f_1(y)} - \mu \Big( \frac{1}{f_1(y)}\Big)'\right]', \qquad y \in \R,
\end{equation}
where $' = d/dy$. Then for sufficiently small shock amplitudes, $\e = P^- - P^+ > 0$, we have
\begin{equation}
\label{eq:g-order2}
g(y) \geq \bar{C} |P'(y)| > 0,
\end{equation}
for all $y \in \R$ and some uniform constant $\bar{C} > 0$ (independent of $\e$).
\end{corollary}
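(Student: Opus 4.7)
The plan is to differentiate out the expression for $g(y)$ in \eqref{defogg} explicitly, split the result into a dominant piece that is positive and of size $|P'|$, and a remainder that is of strictly higher order in $\e$ and therefore absorbable for small amplitudes. By the quotient rule together with the identity $(1/f_1)'' = -f_1''/f_1^2 + 2(f_1')^2/f_1^3$, a direct computation rewrites \eqref{defogg} as
$$g(y) \;=\; \frac{f_2(y)\, f_1'(y) - f_2'(y)\, f_1(y)}{2\, f_1(y)^2} \;-\; \frac{\mu\, f_1''(y)}{2\, f_1(y)^2} \;+\; \frac{\mu\, f_1'(y)^2}{f_1(y)^3}.$$

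For the first (dominant) fraction, Lemma \ref{lem:f_i} provides $f_1 < 0$, $f_2 > 0$, together with the \emph{signed} estimates $f_1'(y) > 0$ and $f_2'(y) > 0$ with $f_1', f_2' \sim |P'|$. Both numerator summands $f_2 f_1'$ and $-f_2' f_1$ are therefore strictly positive, and the two-sided bounds \eqref{eq:f_i-sign}--\eqref{eq:f'_i-order} immediately yield
$$\frac{f_2 f_1' - f_2' f_1}{2\, f_1^2} \;\geq\; C_0\, |P'(y)|,$$
for some $C_0>0$ depending only on $c_1, c_2, c_3$ (hence independent of $\e$). The middle term is harmless: \eqref{eq:f''_i-order} gives $|\mu f_1''/(2 f_1^2)| \leq \cO(\e |P'|)$.

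The main obstacle is the last term $\mu (f_1')^2/f_1^3$. Since $f_1^3 < 0$, this contribution is \emph{negative}, so it has the wrong sign and could in principle cancel the leading positive piece. The rescue is that it carries an extra factor $(f_1')^2 \leq c_3^2 (P')^2$, into which we feed the estimate $|P'| \leq C\e^2$ from Lemma \ref{lem:P-prop} to convert one factor of $|P'|$ into $\cO(\e^2)$; consequently this term is $\cO(\e^2 |P'|) = \cO(\e |P'|)$ as well. Combining, $g(y) \geq (C_0 - C_1 \e)|P'(y)|$ for some $C_1$ independent of $\e$, and choosing $\e$ sufficiently small so that $C_0 - C_1 \e \geq C_0/2 =: \bar C$ closes the argument.
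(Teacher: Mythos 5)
Your proof is correct and follows essentially the same route as the paper: expand $g$ by differentiation, isolate the two positive dominant contributions $f_2 f_1'$ and $-f_2' f_1$ (which the paper writes over the common denominator $2|f_1|^3$ rather than as separate fractions), and absorb the $\mu$-terms as $\cO(\e|P'|)$ errors using \eqref{eq:f''_i-order} and $|P'|\leq C\e^2$. The only difference is the cosmetic choice of keeping three separate fractions instead of a single one.
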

\begin{proof}
Upon differentiation,
\[
g= - \frac{1}{2 f_1^3} \Big( f_2' f_1^2 - f_1 f_1' f_2 + \mu f_1 f_1'' - 2 \mu (f_1')^2 \Big).
\]
Set $0 < \e \ll 1$ small enough so that estimates \eqref{allproperties} from Lemma \ref{lem:f_i} hold. Therefore,
\[
-\mu  \big( f_1 f_1'' - 2 (f_1')^2 \big) \leq \mu |f_1''| |f_1| + 2 \mu |f_1'|^2 \leq c_4 c_1^{-1} \mu \e|P'| + 2 \mu c_3^2 |P'|^2 \leq C_\mu \e |P'|,
\]
for some $C_\mu > 0$ independent of $\e$ and for $0 < \e \ll 1$ (where we have used the first inequality in \eqref{eq:prop-P}). Hence we obtain,
\[
\begin{aligned}
g&= \frac{1}{2 |f_1|^3} \Big( |f_2'| f_1^2 + |f_1| |f_1'| |f_2| + \mu f_1 f_1'' - 2 \mu (f_1')^2 \Big)\\
&\geq \frac{1}{2 |f_1|^3} \Big( |f_2'| f_1^2 + |f_1| |f_1'| |f_2| - C_\mu \e |P'| \Big)\\
&\geq \frac{1}{2} c_1^3 \big( c_3^{-1}c_1^2 + c_1 c_3^{-1}c_2^{-1} - C_\mu \e\big) |P'| \\
&\geq \bar{C} |P'|,
\end{aligned}
\]
for some uniform $\bar{C} > 0$ independent of $\e>0$. This shows the result.
\end{proof}

\subsection{Subsonicity}

We conclude this section by showing a condition that implies the property $s\in(0,\bar s)$, needed in Lemma \ref{lem:f_i}. 
At this point we recall that a state $(P, J)$ is \emph{subsonic} (resp. \emph{sonic}) if $|u| < c_s(P)$ (resp. $|u| = c_s(P)$), where $u=J/P$.

\begin{lem}\label{lemma_s}
Assume $P^- > 0$, $s > 0$, $P^+ = P^- - \varepsilon$ and $J^- = s P^- - A$.
Then, there exists $\varepsilon_2 > 0$ such that if $\e\in(0,\e_2)$, one has
\begin{description}[style=unboxed,leftmargin=0cm]
	\item[(i)] if the state $(P^-, J^-)$ is subsonic then $s < 2 c_s(P^-)$;
	\item[(ii)] if $\gamma > 1$ and 
	\begin{equation}
	|u^-| < \frac{\gamma - 1}{2}c_s(P^-), \label{cond_u}
	\end{equation} 
	then $s < \frac{\gamma + 1}{2}c_s(P^-)$.
\end{description}
\end{lem}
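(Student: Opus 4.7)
The plan is a direct calculation based on the expansion \eqref{expansion_A} of the constant $A$ in powers of the shock amplitude. The hypothesis $J^- = sP^- - A$ gives immediately
\[
u^- = \frac{J^-}{P^-} = s - \frac{A}{P^-},
\]
and from \eqref{expansion_A} one reads
\[
\frac{A}{P^-} = c_s(P^-) - \frac{\gamma + 1}{4\,P^-}\, c_s(P^-)\,\varepsilon + \mathcal{O}(\varepsilon^2).
\]
The crucial observation is that the first-order correction is strictly negative, so that for $\varepsilon > 0$ sufficiently small one has the \emph{strict} inequality $A/P^- < c_s(P^-)$. This is the only analytic ingredient I need; everything else is a rearrangement.

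For \textbf{(i)}, the subsonic hypothesis $|u^-| < c_s(P^-)$ applied to $u^- = s - A/P^-$ yields, in particular, $s - A/P^- < c_s(P^-)$, that is, $s < A/P^- + c_s(P^-)$. Combining with $A/P^- < c_s(P^-)$ (valid for $0 < \varepsilon < \varepsilon_2$ with $\varepsilon_2$ small enough so the $\mathcal{O}(\varepsilon)$ term dominates the $\mathcal{O}(\varepsilon^2)$ remainder), we obtain
\[
s < A/P^- + c_s(P^-) < 2\, c_s(P^-),
\]
as claimed.

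For \textbf{(ii)}, under the assumption \eqref{cond_u} we have $u^- < \frac{\gamma - 1}{2}\, c_s(P^-)$, so
\[
s = u^- + \frac{A}{P^-} < \frac{\gamma - 1}{2}\, c_s(P^-) + \frac{A}{P^-}.
\]
Using again $A/P^- < c_s(P^-)$ for $0 < \varepsilon < \varepsilon_2$ (shrinking $\varepsilon_2$ further if necessary), we conclude
\[
s < \frac{\gamma - 1}{2}\, c_s(P^-) + c_s(P^-) = \frac{\gamma + 1}{2}\, c_s(P^-),
\]
which is the second claim.

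There is no real obstacle here; the only point that requires a moment of care is verifying that the $\mathcal{O}(\varepsilon)$ term in $A/P^-$ has the correct (negative) sign, which is precisely what \eqref{expansion_A} provides since $\gamma + 1 > 0$ and $c_s(P^-) > 0$. The threshold $\varepsilon_2$ is chosen so that the $\mathcal{O}(\varepsilon^2)$ remainder in the expansion is dominated by the leading linear correction, which makes the strict inequality $A/P^- < c_s(P^-)$ uniform on $(0,\varepsilon_2)$.
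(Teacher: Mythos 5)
Your proof is correct, but it follows a genuinely different (and more economical) route than the paper. The paper squares the velocity, writes $(u^-)^2=(s-c_s(P^-))^2+\tfrac{(\gamma+1)c_s(P^-)(s-c_s(P^-))}{2P^-}\varepsilon+\mathcal{O}(\varepsilon^2)$, and argues by contradiction in each case: assuming $s\geq 2c_s(P^-)$ (resp.\ $s\geq\tfrac{\gamma+1}{2}c_s(P^-)$) forces the sign of the first-order correction and contradicts the squared subsonicity bound. You instead isolate the single estimate $A/P^-<c_s(P^-)$ for $0<\varepsilon<\varepsilon_2$, which follows at once from the strictly negative linear term in \eqref{expansion_A}, and then use only the upper half $u^-<c_s(P^-)$ (resp.\ $u^-<\tfrac{\gamma-1}{2}c_s(P^-)$) of the two-sided hypothesis, adding the two inequalities via $s=u^-+A/P^-$. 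This avoids the squaring and the contradiction scaffolding, and it makes transparent that the entire content of the lemma is the one inequality $A/P^-<c_s(P^-)$; in fact that inequality holds for every $0<\varepsilon<P^-$ (apply the mean value theorem to \eqref{expr_A_P} and use $P^+/P^-<1$, $\gamma\geq 1$), so your argument could even dispense with the expansion altogether. Both proofs deliver the same statement; yours is shorter and arguably cleaner.
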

\begin{proof}
First, let us prove \textbf{(i)}.
Since the state $(P^-, J^-)$ is subsonic, we have $|u^-| < c_s(P^-)$. This implies, in turn, that $(u^-)^2 < c_s(P^-)^2$. However, 	
\begin{equation}\label{expr_u}	
	u^- = s - \frac{A}{P^-},
\end{equation}
and using the expansion \eqref{expansion_A}, we infer
\begin{equation*}
	(u^-)^2 = (s - c_s(P^-))^2 + \frac{(\gamma + 1)c_s(P^-)(s-c_s(P^-))}{2 P^-}  \varepsilon + \mathcal{O}(\varepsilon^2).
\end{equation*}	
Suppose, by contradiction, that  $s \geq 2 c_s(P^-)$. Then $s - c_s(P^-) \geq c_s(P^-) > 0$ and $(s - c_s(P^-))^2 \geq c_s(P^-)^2 > 0$. Hence,
\begin{equation*}
	(u^-)^2 \geq c_s(P^-)^2 + \frac{c_s(P^-)^2 (\gamma + 1)}{2 P^-} \varepsilon + \mathcal{O}(\varepsilon^2) > c_s(P^-)^2,
\end{equation*}
for any $\e\in(0,\e_2)$, if $\e_2$ is sufficiently small.
Therefore, we get a contradiction and we have $s < 2 c_s(P^-)$.

Now, let us prove \textbf{(ii)}. From \eqref{cond_u}, it follows that
\begin{equation}
	(u^-)^2 < \Big{(}\frac{\gamma - 1}{2}c_s(P^-)\Big{)}^2. \label{cond_u_sqr}
\end{equation}
Substituting \eqref{expansion_A} into \eqref{expr_u} we obtain
\begin{equation*}
	(u^-)^2 = (s - c_s(P^-))^2 - \frac{(\gamma + 1)c_s(P^-)(c_s(P^-) - s )}{2 P^-}\varepsilon + \mathcal{O}(\varepsilon^2).
\end{equation*}
This equality and \eqref{cond_u_sqr} imply
\begin{equation}
	(s - c_s(P^-))^2 < \Big{(}\frac{\gamma - 1}{2}c_s(P^-)\Big{)}^2 + \frac{(\gamma + 1)c_s(P^-)(c_s(P^-) - s )}{2 P^-}\varepsilon +\mathcal{O}(\varepsilon^2). \label{cond_u_sqr1}
\end{equation}
Suppose, by contradiction, that $s \geq \frac{\gamma + 1}{2}c_s(P^-)$. This yields
\begin{equation}
	s - c_s(P^-) \geq \frac{\gamma - 1}{2}c_s(P^-), \label{ineq_s}
\end{equation}
and since $\frac{\gamma - 1}{2}c_s(P^-)>0$, we obtain
\begin{equation}
	\Big{(}\frac{\gamma - 1}{2}c_s(P^-)\Big{)}^2 \leq (s - c_s(P^-))^2 . \label{ineq_s_sqr}
\end{equation}
Using \eqref{cond_u_sqr1} and \eqref{ineq_s_sqr}, we deduce
\begin{equation}\label{ineq1}
	\frac{(\gamma + 1)c_s(P^-)(c_s(P^-) - s )}{2 P^-}\varepsilon +\mathcal{O}(\varepsilon^2)>0. 
\end{equation}
Multiply inequality \eqref{ineq_s} by
\begin{equation*}
	-\frac{(\gamma + 1)c_s(P^-)}{2 P^-} < 0,
\end{equation*}
in order to obtain
\begin{equation}
	\frac{(\gamma + 1)c_s(P^-)(c_s(P^-) - s )}{2 P^-} \leq -(\gamma^2 - 1)\frac{c_s(P^-)^2}{4 P^-} < 0. \label{ineq2}
\end{equation}
Finally, using \eqref{ineq1} and \eqref{ineq2}, we end up with
\begin{equation}
	0 < -(\gamma^2 - 1)\frac{c_s(P^-)^2}{4 P^-}\varepsilon + \mathcal{O}(\varepsilon^2). \label{ineq3}
\end{equation}
For $\varepsilon$ sufficiently small, the right hand-side of \eqref{ineq3} is negative and we have a contradiction. Therefore, we conclude that
\begin{equation*}
	s < \frac{\gamma + 1}{2}c_s(P^-).
\end{equation*}
and the proof is complete.
\end{proof}

\begin{rem}
\label{remgam3}
Note that for $\gamma \geq 3$, the subsonicity condition for the end state $(P^-, J^-)$ readily implies $|u^-| < \frac{\gamma - 1}{2}c_s(P^-)$.
\end{rem}

\section{The spectral stability problem}\label{secspectprob}
In this section we pose the spectral stability problem and recall previous stability results of \cite{LMZ20a} in the context of small dispersive shocks. Consider a solution to \eqref{QHD-L} of the form $(\bar{\rho}, \bar{m}) (x,t) = (P,J)(y) + e^{\lambda t} (\tilde{\rho}, \tilde{m})(y)$, where $y = x-st$ is the variable of translation, the pair $(\tilde{\rho}, \tilde{m}) = (\tilde{\rho}, \tilde{m})(y)$ denotes a perturbation belonging to an appropriate Banach space $X$ defined in terms of the variable $y$, and $\lambda \in \C$ is the spectral parameter (in-time growth rate). Upon substitution and linearization around the profile $(P,J)(y)$ we obtain the following spectral problem
\begin{equation}\label{eq_variable_coeff}
	\cL \begin{pmatrix}
	\tilde{\rho}\\
	\tilde{m} 
	\end{pmatrix} = \lambda \begin{pmatrix}
	\tilde{\rho}\\
	\tilde{m}
\end{pmatrix},
\end{equation}
where $\cL$ is the linearized operator around the wave,
\begin{equation*}
\cL
\begin{pmatrix}
\tilde{\rho}\\
\tilde{m}
\end{pmatrix}
:=
\begin{pmatrix}
    s \tilde{\rho}_y - \tilde{m}_y\\
    s \tilde{m}_y + { \displaystyle{\Big(\frac{J^2}{P^2}\tilde{\rho}\Big)_y - \Big(\frac{2 J}{P}\tilde{m}\Big)_y }}- \gamma (P^{\gamma-1}\tilde{\rho})_y+\mu \tilde{m}_{yy}+\cL_V\tilde{\rho}
    \end{pmatrix},
\end{equation*}
and
\begin{equation*}
\cL_V \tilde{\rho} := \frac{k^2}{2}\tilde{\rho}_{yyy}-2 k^2 \Big{(} (\sqrt{P})_y\Big{(}\frac{\tilde{\rho}}{\sqrt{P}}\Big{)}_y\Big{)}_y,
\end{equation*}
see \cite{LMZ20a} for details on its derivation. 
For stability purposes, we regard $\cL$ as a closed, densely defined operator acting on the (complex) space $X = L^2(\R) \times L^2(\R)$ 
with domain $D(\cL) = H^3(\R) \times H^2(\R)$. 

The following definition is standard in the stability theory of nonlinear waves \cite{KaPro13,He81}.

\begin{definition}[resolvent and spectra]
Let $\cL : X \to Y$ be a closed linear operator, with $X, Y$ Banach spaces and dense domain $D(\cL) \subset X$. The \emph{resolvent} of $\cL$, denoted as $\varrho(\cL)$, is the set of all complex numbers $\lambda \in \C$ such that $\cL - \lambda$ is injective and onto, and $(\cL - \lambda )^{-1}$ is a bounded operator. The \emph{point spectrum} of $\cL$, denoted as $\ptsp(\cL)$, is the set of all $\lambda \in \C$ such that $\cL - \lambda$ is a Fredholm operator with index equal to zero and non-trivial kernel. The \emph{essential spectrum} of $\cL$, denoted as $\ess(\cL)$, is the set of all $\lambda \in \C$ such that either $\cL -\lambda$ is not Fredholm, or it is Fredholm with non-zero index. The \emph{spectrum} of $\cL$ is defined as $\sigma(\cL) = \ess(\cL) \cup \ptsp(\cL)$.
\end{definition}
\begin{rem}
Since $\cL$ is closed then $\varrho(\cL) = \C \backslash \sigma(\cL)$. Moreover, $\ptsp(\cL)$ consists of isolated eigenvalues with finite multiplicity (see, e.g., \cite{KaPro13,Kat80}). 
\end{rem}

Spectral stability of the profiles is defined in terms of the $L^2$-spectrum of the linearized operator around the wave. This corresponds to stability under localized (finite energy) perturbations.
\begin{definition}
\label{defspectralstab}
The dispersive shock profile $(P,J)$ is spectrally stable if the $L^2$-spectrum of the linearized operator around the wave $\cL$ is contained in the stable complex half plane, that is,
\[
\sigma(\cL) \subset \{ \lambda \in \C \, : \, \Re \lambda < 0 \} \cup \{ 0 \}.
\]
\end{definition}

\begin{rem}
As it is customary, $\lambda = 0$ is an eigenvalue of $\cL$ associated to the eigenfunction $(P',J')^\top$. Indeed, from the exponential decay (Corollary \ref{corexpdecay}) it follows that $(P',J')^\top \in D(\cL)$ and, upon substitution, it is easy to verify that $\cL (P',J')^\top = 0$; $\lambda = 0$ is called the translational invariance eigenvalue.
\end{rem}

\subsection{Stability of the essential spectrum}

It is proved in \cite[Lemma 3]{LMZ20a} that the essential spectrum of $\cL$ is stable provided that the end states are subsonic or sonic. The following lemma provides a sufficient condition for the stability of the essential spectrum in the context of small-amplitude shocks.
\begin{lem}
\label{lemessspect}
Assume $P^- > 0$ and $s\in(0,2c_s(P^-))$. If the shock amplitude, $\e = P^--P^+ > 0$, is sufficiently small then the $L^2$-essential spectrum of the linearized operator around the shock profile is stable. More precisely,
\[
\ess(\cL) \subset \{\lambda \in \mathbb{C} \, : \, \Re \lambda < 0\} \cup \{0\}.
\]
\end{lem}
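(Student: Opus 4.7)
The plan is to reduce the statement to Lemma 3 of \cite{LMZ20a}, which establishes that $\ess(\cL)$ lies in $\{\Re\lambda<0\}\cup\{0\}$ whenever both asymptotic end states $(P^\pm,J^\pm)$ are subsonic (or sonic). Thus the only substantive task is to verify that this subsonicity condition is implied by the hypotheses of the present lemma, namely $s\in(0,2c_s(P^-))$ combined with $0<\varepsilon\ll 1$. This is the partial converse of Lemma \ref{lemma_s}\textbf{(i)} and should follow directly from the asymptotic expansion of $A$ in $\varepsilon$.

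First I would use \eqref{eq:J-P} to write $u^\pm=J^\pm/P^\pm = s - A/P^\pm$, and combine this with \eqref{expansion_A} to obtain the leading-order expression
\[
u^\pm = s - c_s(P^-) + \cO(\varepsilon).
\]
The hypothesis $s\in(0,2c_s(P^-))$ forces $|s-c_s(P^-)|<c_s(P^-)$ with a uniform gap, and because $c_s(P^+)=c_s(P^-)+\cO(\varepsilon)$, the strict inequality $|u^\pm|<c_s(P^\pm)$ persists for all sufficiently small $\varepsilon>0$. This shows that both end states are strictly subsonic under the assumptions of the lemma.

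With both end states subsonic, the conclusion $\ess(\cL)\subset\{\Re\lambda<0\}\cup\{0\}$ follows directly from \cite[Lemma 3]{LMZ20a}. The underlying mechanism, which can be re-derived if one prefers a self-contained argument via the standard theory (see \cite{KaPro13,He81}), is that the essential spectrum of $\cL$ agrees with the union over the $\pm$ endpoints of the dispersion sets
\[
\bigl\{\lambda\in\C\,:\,\det\bigl(\lambda I - \hat{\cL}_\pm(i\xi)\bigr)=0 \text{ for some } \xi\in\R\bigr\}
\]
corresponding to the constant-coefficient operators obtained by freezing the coefficients of $\cL$ at $y=\pm\infty$; a short computation then shows that the subsonicity of $(P^\pm,J^\pm)$ places these dispersion curves in the closed left half-plane, with tangency to the imaginary axis only at the origin. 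I do not foresee any genuine obstacle, since the argument is essentially a verification that the hypothesis of a previously-established lemma holds in the small-amplitude regime; the only point requiring modest care is the uniformity in $\varepsilon$ of the $\cO(\varepsilon)$ remainder in \eqref{expansion_A}, which is automatic from the proof of that expansion.
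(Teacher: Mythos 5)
Your proposal is correct and follows essentially the same route as the paper: verify that both end states $(P^\pm,J^\pm)$ become strictly subsonic for sufficiently small $\e$ using the expansion \eqref{expansion_A} together with $u^\pm = s - A/P^\pm$ and $c_s(P^+)=c_s(P^-)+\cO(\e)$, and then invoke Lemma 3 of \cite{LMZ20a}. The paper merely makes the ``uniform gap'' explicit via an auxiliary $\delta$ with $\delta c_s(P^-) < s < (2-\delta)c_s(P^-)$, but the substance of the argument is identical.
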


\begin{proof}
Let us define $\delta_1 := s/2$ and $\delta_2 := (2 c_s(P^-) - s)/2$. 
Clearly, one has 
$$\delta_1\in(0,s), \qquad \qquad \delta_2\in(0,2c_s(P^-)-s),$$ 
because of the assumption $s\in(0,2c_s(P^-))$.
Moreover, set 
$$\delta:=\min\{\tilde{\delta}_1,\tilde{\delta}_2\}, \qquad \mbox{ where } \qquad \tilde{\delta}_i:=\delta_i/c_s(P^-),  \quad i=1,2.$$ 
We have $\delta c_s(P^-) \leq \tilde{\delta}_1 c_s(P^-)$ and by using the definition of $\tilde{\delta}_1$, we get 
$$\delta c_s(P^-)\leq\delta_1<s.$$
Similarly, $-\delta c_s(P^-) \geq -\tilde{\delta}_2 c_s(P^-)$ and, as a trivial consequence, 
$$2 c_s(P^-)-\delta c_s(P^-) \geq 2 c_s(P^-) -\tilde{\delta}_2 c_s(P^-)\geq2 c_s(P^-) - \delta_2>s,$$
where we used the definition of $\tilde{\delta}_2$ and that $\delta_2<2c_s(P^-)-s$. 
In summary, we deduced
\begin{equation}\label{eq:s}
	\delta c_s(P^-) < s < (2 - \delta) c_s(P^-).
\end{equation}
We have $\delta = \delta(P^-,s,\gamma)$ and $0 < \delta < 1$. 
From \eqref{eq:s} it follows that
\begin{equation*}
	-(1-\delta) c_s(P^-) < s - c_s(P^-) < (1-\delta) c_s(P^-),
\end{equation*}
which is equivalent to
$$(s - c_s(P^-))^2 <(1 - \delta)^2 c_s(P^-)^2.$$
By using \eqref{expr_u}, the expansion \eqref{expansion_A} and the latter estimate, we infer
\begin{align*}
	(u^-)^2&=(s - c_s(P^-))^2 + \frac{(\gamma + 1)c_s(P^-)(s-c_s(P^-))}{2 P^-}  \varepsilon + \mathcal{O}(\varepsilon^2)\\
	&< (1 - \delta)^2 c_s(P^-)^2 + a_1(\varepsilon),
\end{align*} 
where $|a_1(\varepsilon)| = \mathcal{O}(\varepsilon)$. 
For sufficiently small $\varepsilon > 0$, we have $a_1(\varepsilon) < \delta^2 c_s(P^-)^2$, and then
\begin{equation}\label{ineq_um}
	(u^-)^2 < (1 + 2 \delta(\delta - 1)) c_s(P^-)^2 < c_s(P^-)^2.
\end{equation}
Thus, we have proved that the left state is subsonic. 
Let us now consider the right state: from the Rankine-Hugoniot condition \eqref{eq:RH2}, it follows that
\begin{equation*}
	J^+(\varepsilon) = J^-(\varepsilon) + s(P^+(\varepsilon) - P^-).
\end{equation*}
Dividing by $P^+(\varepsilon)>0$, using $u^+(\varepsilon) = J^+(\varepsilon)/P^+(\varepsilon)$ and $P^+(\varepsilon) = P^- - \varepsilon$, we obtain
\begin{equation*}
	u^+(\varepsilon) = \frac{J^-(\varepsilon)}{P^--\varepsilon} - s \varepsilon.
\end{equation*}
Since $u^-(\varepsilon) = J^-(\varepsilon)/P^-$, we get
\begin{equation*}
u^+(\varepsilon) = \frac{P^-}{P^- - \varepsilon}u^-(\varepsilon) - s \varepsilon.
\end{equation*}
By using the relation
\begin{equation*}
	\frac{P^-}{P^- - \varepsilon} = 1 + \mathcal{O}(\varepsilon),
\end{equation*}
we deduce
\begin{equation*}
	u^+(\varepsilon) = u^-(\varepsilon)+\mathcal{O}(\varepsilon),
\end{equation*}
and
\begin{equation*}
	u^+(\varepsilon)^2 = u^-(\varepsilon)^2 + a_2(\varepsilon),
\end{equation*}
where $|a_2(\varepsilon)| = \mathcal{O}(\varepsilon)$. 
The definition \eqref{eq:sound-speed} implies $c_s(P^+)^2 = c_s(P^-)^2 + a_3(\varepsilon)$, with $|a_3(\varepsilon)| = \mathcal{O}(\varepsilon)$,
and subtracting the last two equations, we infer 
\begin{equation*}
	(u^+)^2 - c_s(P^+)^2 = (u^-)^2 - c_s(P^-)^2 + a_4(\varepsilon),
\end{equation*}
where $a_4(\varepsilon):= a_2(\varepsilon)-a_3(\varepsilon)$, and so, $|a_4(\e)|=\mathcal{O}(\varepsilon)$. 
Choosing $\varepsilon>0$ small enough that, from \eqref{ineq_um}, we get 
$$(u^-)^2 - c_s(P^-)^2 < 2 \delta (\delta - 1) c_s(P^-)^2,$$ 
and $a_4(\varepsilon) < \delta (1-\delta) c_s(P^-)^2$, we conclude that
\begin{equation*}
	(u^+)^2 - c_s(P^+)^2< 2 \delta (\delta - 1) c_s(P^-)^2 +a_4(\varepsilon)<\delta (\delta - 1) c_s(P^-)^2 < 0,
\end{equation*}
that implies $|u^+| < c_s(P^+)$ and the right state is subsonic as well.
Hence, we apply Lemma 3 from \cite{LMZ20a} to conclude that the essential spectrum is contained in $\{\lambda \in \mathbb{C}:\Re(\lambda) < 0\} \cup \{0\}$.
\end{proof}

\begin{rem}
\label{remnogap}
It is important to observe that, as proved in \cite{LMZ20a}, there is accumulation of the essential spectrum near the origin. Indeed, from standard analyses \cite{KaPro13,He81}, it is known that the essential spectrum is sharply bounded to the left of the Fredholm borders, $\Sigma_\pm = \{ \lambda = \lambda_\pm(\xi) \in \C \, : \, \xi \in \R\}$, where $\lambda = \lambda_\pm(\xi)$ is determined by the roots of the dispersion relation (see eqn. (36) in \cite{LMZ20a}),
\begin{equation}
\label{disprel}
\lambda^2 + \xi ( \mu \xi - i(s + \beta^\pm)) \lambda + \xi^2 \Big(\tfrac{1}{2} k^2 \xi^2 - \alpha^\pm - s(\beta^\pm + i \mu \xi) \Big) = 0, \qquad \xi \in \R,
\end{equation}
where $\beta^\pm = s - 2J^\pm/P^\pm$ and $\alpha^\pm = (J^\pm)^2/ (P^\pm)^2 - \gamma (P^\pm)^{\gamma-1}$. From \eqref{disprel} it is clear that there is tangency at the origin. Moreover, it is shown in \cite{LMZ20a} that these curves are contained in the stable half plane. This implies that for $\Re \lambda \geq 0$, $\lambda \neq 0$ the Fredholm index of $\cL - \lambda I$ is zero and consequently $\ess(\cL) \subset \{ \Re \lambda < 0\} \cup \{ 0\}$. See Figure \ref{figFredBrds} for a calculation of these Fredholm borders for particular parameter values. The fact that any (stable) neighborhood of the origin contains elements of the essential spectrum is referred to as the absence of a spectral gap between the spectrum of $\cL$ and the imaginary axis. This fact introduces complications to studying the asymptotic (nonlinear) stability of the profiles as it precludes the application of standard exponentially decaying semigroup theory (see \cite{KaPro13} for further information).
\begin{figure}[t]
\begin{center}
\includegraphics[scale=.55, clip=true]{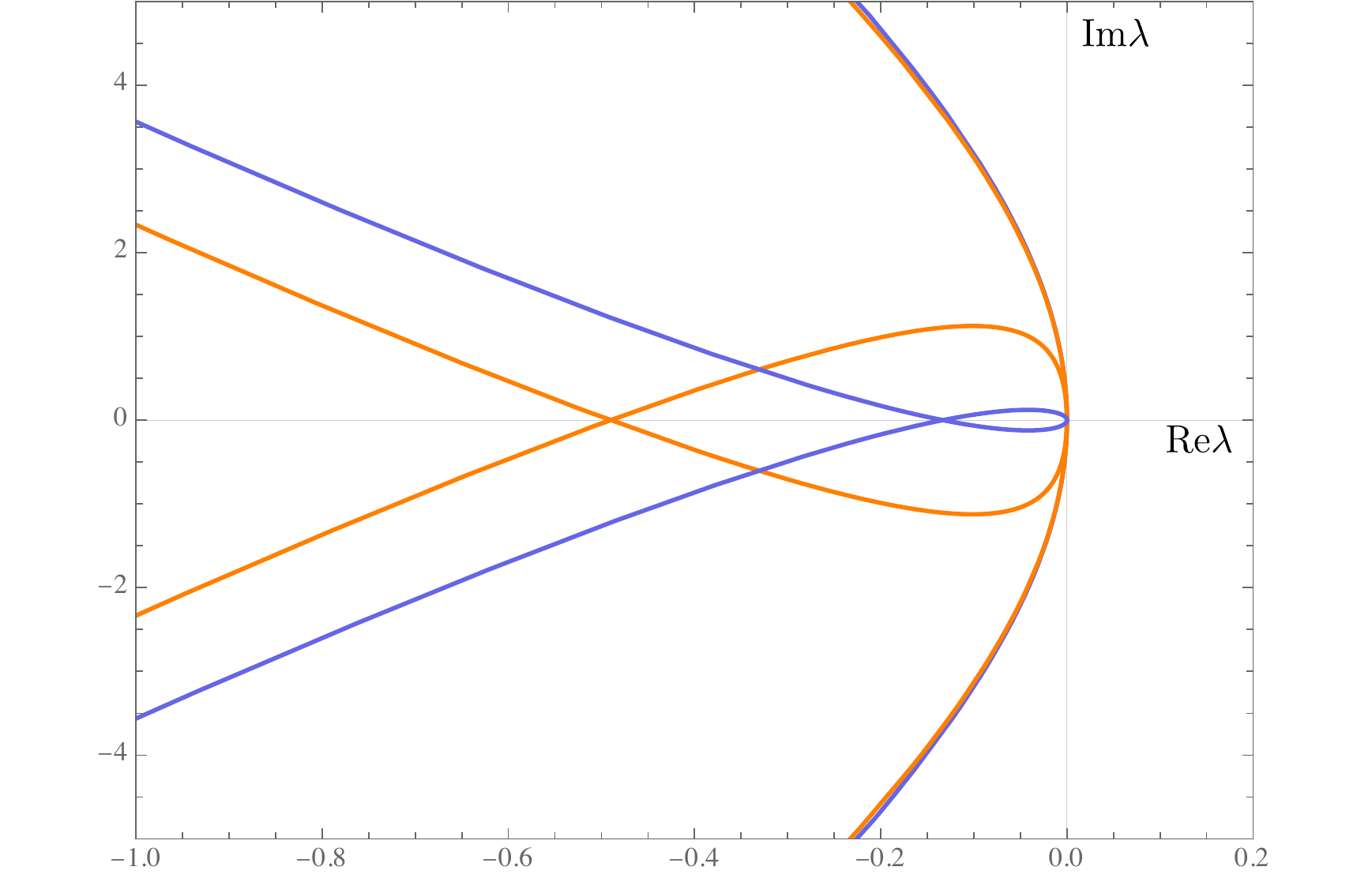}
\end{center}
\caption{\small{Fredholm borders $\Sigma_\pm$ as the complex roots $\lambda = \lambda_+(\xi)$ (orange) and $\lambda = \lambda_-(\xi)$ (blue), with $\xi \in \R$, of the dispersion relation \eqref{disprel}. Here $P^+ = 0.519$, $P^- = P^+ + \e$, $\e = 0.2$, $J^+ = -0.418$, $\gamma = 1.5$, $\mu = 0.1$ and $k = 0.5$.  The essential spectrum of $\cL$ is sharply bounded to the left of these curves in the complex plane (color online).}}
\label{figFredBrds}
\end{figure}
\end{rem}

\subsection{The point spectrum and the integrated operator}

Following Goodman \cite{Go86,Go91}, we now recast the above linearized system in terms of \emph{integrated variables}. This transformation removes the zero eigenvalue without further modifications to the point spectrum (see Lemma \ref{lemequiv} below) and, more importantly, provides better energy estimates. To this end, consider
\begin{equation}
\label{intervar}
\rho(x)=\int_{-\infty}^x \tilde{\rho}(y)dy, \qquad m(x)=\int_{-\infty}^x \tilde{m}(y) dy.
\end{equation} 
Integrating equation \eqref{eq_variable_coeff} it follows that for $\lambda \neq 0$ the integrated variables $\rho$ and $m$ decay exponentially as $|x|\rightarrow \infty$. Expressing $\tilde{\rho}$ and $\tilde{m}$ in terms of $\rho$ and $m$, and integrating \eqref{eq_variable_coeff} from $-\infty$ to $x$ we get the following system in integrated variables,
\begin{equation}
\label{eq:sys-int}
\begin{aligned}
\lambda \rho &= s \rho' - m',\\
\lambda m &= f_1 \rho' + f_2 m' + \mu m''+ {\displaystyle{\frac{k^2}{2}}}\rho''' - 2k^2 \left(\sqrt{P}\right)'\left(\displaystyle\frac{\rho'}{\sqrt{P}}\right)',
\end{aligned}
\end{equation}
where $' = d/dx$ and 
\begin{align*}
	f_1(x)&=\frac{J(x)^2}{P(x)^2}-\gamma P(x)^{\gamma-1}, \\
	f_2(x)&=s-2\frac{J(x)}{P(x)}. 
\end{align*}
Notice that the functions $f_1$ and $f_2$ above are exactly the functions defined in \eqref{eq:f1} and \eqref{eq:f2}; this follows from straightforward algebra using \eqref{eq:J-P} and \eqref{expr_A}.


In view of the form of the integrated system \eqref{eq:sys-int}, we now define the \emph{integrated operator} as,
\[
\begin{aligned}
\cI &: D(\cI) \subset L^2(\R) \times L^2(\R) \longrightarrow L^2(\R) \times L^2(\R),\\
\cI \begin{pmatrix} \rho \\ m \end{pmatrix} &:= 
\begin{pmatrix} 
s \rho' - m' \\ 
f_1 \rho' + f_2 m' + \mu m'' + {\displaystyle{\frac{k^2}{2}}} \rho''' - 2k^2\left(\sqrt{P}\right)'\left(\displaystyle\frac{\rho'}{\sqrt{P}}\right)' \end{pmatrix},
\end{aligned}
\]
with domain $D := D(\cI) = D(\cL) = H^3(\R) \times H^2(\R)$. Once again, the integrated operator is a closed, linear and densely defined operator in $L^2(\R) \times L^2(\R)$. 

\begin{lemma}
\label{lemequiv}
The point spectrum of the original operator $\cL$ is contained in that of the integrated operator $\cI$, except for the eigenvalue zero. More precisely, 
\[
\ptsp(\cL) \backslash \{0\} \subset \ptsp(\cI).
\]
\end{lemma}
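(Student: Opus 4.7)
The plan is to take any $\lambda \in \ptsp(\cL)\setminus\{0\}$ with nontrivial eigenfunction $(\tilde\rho,\tilde m)^\top \in D(\cL)$ and construct from it an eigenfunction of $\cI$ at the same $\lambda$ via the primitives \eqref{intervar}. Since $\lambda \notin \ess(\cL)$, the constant-coefficient asymptotic first-order ODE form of $(\cL-\lambda)(\tilde\rho,\tilde m)^\top = 0$ is hyperbolic at $\lambda$, so by a standard gap-lemma (or Levinson-type) argument both $(\tilde\rho,\tilde m)^\top$ and all of its derivatives decay exponentially as $|x|\to\infty$.

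Next I would verify that $(\rho,m)^\top$ defined by \eqref{intervar} belongs to $D(\cI) = H^3(\R)\times H^2(\R)$ and satisfies $\cI(\rho,m)^\top = \lambda(\rho,m)^\top$. The crucial observation is that every component of $\cL(\tilde\rho,\tilde m)^\top$ is already in conservation form, so integrating $\lambda\tilde\rho = (s\tilde\rho - \tilde m)_y$ and the analogous conservative identity for the momentum equation over all of $\R$ kills the right-hand sides by exponential decay of the boundary terms, leaving $\lambda\int_\R\tilde\rho\,dy = 0 = \lambda\int_\R\tilde m\,dy$. Since $\lambda\neq 0$, both mean values vanish, so $\rho(x) = -\int_x^{+\infty}\tilde\rho\,dy$ and analogously $m(x)$ inherit exponential decay as $x\to+\infty$, while decay at $-\infty$ is immediate. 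Thus $(\rho,m)\in L^2(\R)^2$, and since $\rho' = \tilde\rho \in H^3(\R)$ and $m' = \tilde m \in H^2(\R)$, we have $(\rho,m)^\top \in H^4(\R)\times H^3(\R) \subset D(\cI)$. Integrating $(\cL-\lambda)(\tilde\rho,\tilde m)^\top = 0$ on $(-\infty,x)$ and using \eqref{eq:J-P}--\eqref{expr_A} to identify the coefficients in the resulting system recovers exactly \eqref{eq:sys-int}, i.e.\ $\cI(\rho,m)^\top = \lambda(\rho,m)^\top$. Non-triviality is automatic, since $(\rho,m)^\top\equiv 0$ would force $(\tilde\rho,\tilde m)^\top = (\rho',m')^\top \equiv 0$, contradicting the choice of eigenfunction.

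The main obstacle is to guarantee that $\lambda$ lies in $\ptsp(\cI)$ rather than in $\ess(\cI)$, i.e., that $\cI-\lambda$ is Fredholm of index zero. For this I would show $\ess(\cI) = \ess(\cL)$. By Corollary \ref{corexpdecay} the variable coefficients of $\cI$ converge exponentially fast to constants $f_1^\pm, f_2^\pm$ as $|x|\to\infty$, and $(\sqrt{P})'\to 0$ exponentially, so the Fredholm borders of $\cI$ are completely determined by its constant-coefficient asymptotic symbol. A direct Fourier computation shows that this asymptotic symbol produces exactly the same dispersion relation \eqref{disprel} as $\cL$, since the asymptotic integrated and non-integrated operators differ by an outer $\partial_x \leftrightarrow i\xi$ factor which leaves the characteristic polynomial in $\lambda$ unchanged. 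Hence $\lambda \notin \ess(\cL)$ implies $\lambda \notin \ess(\cI)$, and combined with the nontrivial kernel constructed above we conclude $\lambda \in \ptsp(\cI)$.
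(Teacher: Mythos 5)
Your construction is essentially the paper's own proof: take the eigenfunction of $\cL$, pass to the primitives \eqref{intervar}, use $\lambda\neq 0$ together with the conservative form of the equations to get exponential decay (hence $L^2$-membership) of the integrated variables, and verify that they solve \eqref{eq:sys-int}. Your additional final step --- checking that $\cI-\lambda$ is Fredholm of index zero by comparing the asymptotic constant-coefficient symbols of $\cI$ and $\cL$, so that $\lambda$ genuinely lands in $\ptsp(\cI)$ in the sense of the paper's Fredholm-based definition rather than in $\ess(\cI)$ --- is correct and in fact addresses a point the paper's proof leaves implicit.
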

\begin{proof}
Suppose $\lambda \in \ptsp(\cL)$, $\lambda \neq 0$ with eigenfunction $W = (\tilde{\rho}, \tilde{m})^\top \in D$ satisfying $\cL W = \lambda W$. Define $\rho = \rho(x)$ and $m = m(x)$ as the antiderivatives in \eqref{intervar} of $\tilde{\rho}$ and $\tilde{m}$, respectively. Then clearly $\rho' \in H^3(\R)$ and $m' \in H^2(\R)$. Moreover, in view that $\lambda \neq 0$, integration of \eqref{eq_variable_coeff} yields the exponential decay as $|x|\rightarrow \infty$ of the integrated variables, $\rho$ and $m$. This shows, in turn, that $\rho, m \in L^2(\R)$. We conclude that $V := (\rho, m)^\top \in D$ is a solution to system \eqref{eq:sys-int}. This implies that $\cI V = \lambda V$ and that $\ptsp(\cL) \backslash \{0\} \subset \ptsp(\cI)$, as claimed.
\end{proof}

As a consequence of Lemma \ref{lemequiv} it suffices to prove that $\ptsp(\cI)$ is stable in order to conclude the same for the original operator $\cL$.

\section{Energy estimates}
\label{sec:main}

In this section we prove the main result of the paper. For that purpose, we establish energy estimates for the point spectrum of the integrated operator $\cI$. Lemma \ref{lem:f_i} and the properties \eqref{allproperties} play a key role along the proof.


\begin{lemma}[energy estimate]
\label{lemeest}
Assume $P^- > 0$ and $s\in(0,\bar s)$ where $\bar{s}> 0$ is defined in \eqref{defbars}. Let the shock amplitude, $\e = P^- - P^+ > 0$, be sufficiently small. Then the integrated operator is point spectrally stable, that is, $\ptsp(\cI) \subset \{ \Re \lambda < 0\}$.
\end{lemma}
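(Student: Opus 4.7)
Let $(\rho,m)\in D$ be a nontrivial eigenfunction of $\cI$ with eigenvalue $\lambda$. The goal is to derive an identity of the form $\Re\lambda\cdot E[\rho,m] = -D[\rho,m]$, where $E$ is a positive-definite weighted energy and $D\geq 0$ a dissipation functional that is strictly positive whenever $(\rho,m)\not\equiv 0$; this would immediately force $\Re\lambda<0$.

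The starting point is to multiply the second equation of \eqref{eq:sys-int} by $m^*$, integrate over $\R$ and take the real part. Standard integration by parts turns the viscosity term into $-\mu \|m'\|_{L^2}^2$ and the $f_2 m'$ term into $-\tfrac12 \int_\R f_2'\,|m|^2\,dx$, both dissipative thanks to the sign estimate $f_2'\geq c_3^{-1}|P'|$ from Lemma \ref{lem:f_i}. The constant-coefficient dispersion $\tfrac{k^2}{2}\rho'''$, after two integrations by parts and substitution of the differentiated first equation $m'' = s\rho''-\lambda\rho'$, produces the contribution $-\tfrac{k^2}{2}\Re\lambda\,\|\rho'\|_{L^2}^2$, which will be moved to the left-hand side as part of $\Re\lambda\cdot E$. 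The coupling term $\int_\R f_1\rho' m^*\,dx$ is handled by eliminating $\rho'$ via the first equation, $f_1\rho' = (f_1/s)(m'+\lambda\rho)$, giving an additional dissipative piece $-\tfrac{1}{2s}\int_\R f_1'\,|m|^2\,dx$ (correctly signed because $f_1'\geq c_3^{-1}|P'|>0$ by Lemma \ref{lem:f_i}) together with a $\lambda$-weighted cross term.

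I would then combine this identity with the first equation multiplied by $w(x)\rho^*$, with $w$ a bounded positive weight designed so that $w'$ is proportional to the quantity $g$ of Corollary \ref{corestg}; the natural choice is a translate of $-(f_2/f_1)+\mu(1/f_1)'$, whose derivative equals $2g\geq 2\bar C|P'|$. This is precisely the weight needed to generate a $|P'|$-weighted dissipation in $\rho$ that controls the most delicate term in the problem, namely the nonlinear Bohm dispersion $-2k^2(\sqrt P)'(\rho'/\sqrt P)' m^*$. That term I would rewrite via $(\sqrt P)'(\rho'/\sqrt P)' = (P'/2P)\rho''-(P')^2/(4P^2)\rho'$; after integration by parts and use of the first equation to trade $m^{*'}$ for derivatives of $\rho^*$, its leading real part is signed consistently with the $g$-dissipation, while all remainders scale at most as $\e$ times the leading dissipation by Lemma \ref{lem:P-prop} and Corollary \ref{corexpdecay}.

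The last step is to choose $\e$ small enough so that every $O(\e)$ error term coming from $f_1',f_2',f_i''$ and from the sub-leading Bohm contributions is absorbed into the leading positive dissipation. The resulting identity has the schematic form
\[
\Re\lambda\int_\R\bigl(w|\rho|^2+|m|^2+\tfrac{k^2}{2}|\rho'|^2\bigr)\,dx = -D[\rho,m],
\]
with $D\geq c_\star\int_\R\bigl(\mu|m'|^2+|P'|\,|m|^2+|P'|\,|\rho|^2+|P'|\,|\rho'|^2\bigr)\,dx>0$ for nontrivial $(\rho,m)$, forcing $\Re\lambda<0$. The principal obstacle is precisely the nonlinear Bohm term: being third-order and variable-coefficient in $\rho$, it cannot be absorbed by any constant-coefficient energy, and only a profile-adapted weighted estimate will do. The novelty is the systematic use of the weight $g$ from Corollary \ref{corestg} together with the monotonicity and smallness properties of the profile from Lemma \ref{lem:P-prop} to produce a $|P'|$-weighted dissipation in $\rho$ strong enough to close the estimate in the small-amplitude regime.
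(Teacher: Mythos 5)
Your overall architecture (integrated variables, a $\Re\lambda$-weighted energy balanced against a $|P'|$-weighted dissipation, the quantity $g$ of Corollary \ref{corestg}, smallness of $\e$ to absorb errors) matches the paper, and your treatment of the constant-coefficient dispersion term and your decomposition of the Bohm term are correct. But two structural choices break down. First, eliminating $\rho'$ from the coupling term via $f_1\rho'=(f_1/s)(m'+\lambda\rho)$ leaves the cross term $\tfrac{\lambda}{s}\int_\R f_1\,\rho\, m^*\,dx$, which you acknowledge but never control: its real part scales with $|\lambda|$ (in particular with $|\Im\lambda|$, which is unbounded on the putative unstable point spectrum), so it is majorized neither by $\Re\lambda$ times any energy nor by a dissipation that only sees $|P'|$-weighted quantities. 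The paper avoids this by multiplying the momentum equation by $m^*/|f_1|$, so that the coupling coefficient becomes the constant $f_1/|f_1|=-1$; integrating by parts onto $m^*$ and then using $m'=s\rho'-\lambda\rho$ converts the coupling term exactly into $\Re\lambda\int_\R|\rho|^2\,dx$ with no remainder. Without that normalization, the integration-by-parts route instead produces the positive (hence wrongly signed) term $\tfrac{s}{2}\int_\R f_1'|\rho|^2\,dx$ plus a cross term $\int_\R f_1'\rho\, m^*\,dx$, so the weight $1/|f_1|$ is genuinely needed, not merely convenient.

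Second, your source of profile-weighted dissipation is misplaced. Multiplying the continuity equation by $w\rho^*$ with $w'\propto g$ yields $-\tfrac{s}{2}\int_\R w'|\rho|^2\,dx$ only at the price of the cross term $\Re\int_\R w\,m'\rho^*\,dx$, which is of size $\|m'\|_{L^2}\|\rho\|_{L^2}$ (the weight $-(f_2/f_1)+\mu(1/f_1)'$ is $O(1)$, not small) and cannot be absorbed: Young's inequality leaves an unweighted $\int_\R|\rho|^2\,dx$ with no matching dissipation. Moreover, what is actually needed to close the Bohm estimates is $\int_\R|P'|\,|\rho'|^2\,dx$ --- your own rewriting produces cross terms of the form $|P'||\rho'||m'|$ and $\e|P'||\rho'||m|$ --- and a zero-order weighted identity in $\rho$ cannot produce a first-order dissipation. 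In the paper both missing ingredients come out of the single multiplier $m^*/|f_1|$: the combination $\bigl[f_2/f_1-\mu(1/f_1)'\bigr]m^*m'$ yields $\int_\R g|m|^2\,dx\geq\bar C\int_\R|P'||m|^2\,dx$ after one integration by parts, and the weighted dispersion term $\tfrac{k^2}{2}\int_\R m^*\rho'''/|f_1|\,dx$ yields, besides the energy contribution $\tfrac{k^2}{2}\Re\lambda\int_\R|\rho'|^2/|f_1|\,dx$, the good term $\tfrac{k^2 s}{4}\int_\R|f_1'|\,|\rho'|^2/|f_1|^2\,dx$, which supplies exactly the $|P'|\,|\rho'|^2$ dissipation. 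As written, your plan does not close.
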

\begin{proof}
By contradiction, let us suppose that $\lambda \in \ptsp(\cI)$ with $\Re \lambda \geq 0$ and with eigenfunction $V = (\rho, m)^\top \in D$. Then $\cI V = \lambda V$ and equations \eqref{eq:sys-int} hold. Multiply the second equation in \eqref{eq:sys-int} by $m^*/|f_1|$ and integrate over $\R$. The result is
\[
\begin{aligned}
\lambda \int_{\R} \frac{|m|^2}{|f_1|} \, dx &= \int_{\R} \left[ \frac{f_1}{|f_1|} m^* \rho' + \frac{f_2}{|f_1|} m^*m' + \frac{\mu}{|f_1|} m^* m'' + \frac{k^2}{2|f_1|} m^* \rho'''+\right. \\
&\quad \qquad \left.- 2 k^2 \frac{(\sqrt{P})'}{|f_1|} m^* \!\Big( \frac{\rho'}{\sqrt{P}}\Big)'\right] \, dx.
\end{aligned}
\]
Let us choose $\e = P^- - P^+$ small enough so that estimates \eqref{allproperties} from Lemma \ref{lem:f_i} hold. Then, in particular, we have that $f_1 \leq -c_1 <0$, $f_1/|f_1| = \text{sgn} f_1 = -1$. Also, from the first equation in \eqref{eq:sys-int} we have the identity $m' = s \rho' - \lambda \rho$. Therefore, substitute last identity and integrate by parts to obtain
\begin{equation}
\label{util}
\begin{aligned}
\lambda &\int_{\R} \frac{|m|^2}{|f_1|} \, dx - \int_{\R} (s\rho' - \lambda \rho)^* \rho \, dx + \int_\R \left[ \frac{f_2}{f_1} - \mu \Big(\frac{1}{f_1} \Big)'\right] m^* m' \, dx + \mu \int_\R \frac{|m'|^2}{|f_1|}\, dx = \\
&= - \frac{k^2}{2} \int_\R \frac{(m')^*\rho''}{|f_1|} \, dx + \frac{k^2}{2} \int_\R \Big(\frac{1}{f_1} \Big)' m^* \rho'' \, dx - 2k^2 \int_\R \frac{(\sqrt{P})'}{|f_1|} m^* \!\Big( \frac{\rho'}{\sqrt{P}}\Big)' \, dx.
\end{aligned}
\end{equation}
Since
\[
\begin{aligned}
\Re \! \int_{\R} (s\rho' - \lambda \rho)^* \rho \, dx &= s \, \Re \! \int_\R (\rho')^* \rho \, dx - (\Re \lambda) \int_\R |\rho|^2 \, dx \\
&= \frac{s}{2} \int_\R \frac{d}{dx} \big( |\rho|^2\big) \, dx - (\Re \lambda) \int_\R |\rho|^2 \, dx \\
&=- (\Re \lambda) \int_\R |\rho|^2 \, dx,
\end{aligned}
\]
and
\[
\begin{aligned}
\Re \! \int_{\R} \left[ \frac{f_2}{f_1} - \mu \Big(\frac{1}{f_1} \Big)'\right] m^* m' \, dx &= \int_{\R} \left[ \frac{f_2}{f_1} - \mu \Big(\frac{1}{f_1} \Big)'\right] \Re (m^* m') \, dx\\
&= \frac{1}{2} \int_\R \left[ \frac{f_2}{f_1} - \mu \Big(\frac{1}{f_1} \Big)'\right] \frac{d}{dx} \big( |m|^2 \big) \, dx\\
&= - \frac{1}{2} \int_\R \left[ \frac{f_2}{f_1} - \mu \Big(\frac{1}{f_1} \Big)'\right]' |m|^2 \, dx,
\end{aligned}
\]
then taking the real part of \eqref{util} and after integration by parts we arrive at
\begin{equation}\label{util2}
\begin{aligned}
(\Re \lambda) &\int_\R \left[ |\rho|^2 + \frac{|m|^2}{|f_1|}\right] \, dx + \int_\R g |m|^2 \, dx + \mu \int_\R \frac{|m'|^2}{|f_1|}\, dx = \\
&\quad = \frac{k^2}{2} \Re \! \int_\R \left[ \frac{(m')^* \rho''}{f_1}  + \Big( \frac{1}{f_1} \Big)' m^* \rho'' + \frac{4(\sqrt{P})'}{f_1} m^* \!\Big( \frac{\rho'}{\sqrt{P}}\Big)' \right] \, dx,
\end{aligned}
\end{equation}
where $g = g(x)$ is the function defined in \eqref{defogg} and where we have substituted $|f_1|= - f_1$.

Let us integrate by parts the terms involved in the right hand side of \eqref{util2}. First, use again the identity $m' = s\rho' - \lambda \rho$ from the first equation in \eqref{eq:sys-int} to obtain
\begin{align}
\Re \! \int_\R \frac{(m')^* \rho''}{f_1} \, dx &= - \Re \! \int_\R \frac{(m'')^* \rho'}{f_1} \, dx - \Re \! \int_\R \Big( \frac{1}{f_1} \Big)' (m')^* \rho' \, dx \nonumber\\
&= - \Re \! \int_\R \frac{(s \rho'' - \lambda \rho' )^* \rho'}{f_1} \, dx - \Re \! \int_\R \Big( \frac{1}{f_1} \Big)' (m')^* \rho' \, dx \nonumber\\
&= - \frac{s}{2} \int_\R \frac{(|\rho'|^2)'}{f_1} \, dx + (\Re \lambda) \int_\R \frac{|\rho'|^2}{f_1} \, dx - \Re \! \int_\R \Big( \frac{1}{f_1} \Big)' (m')^* \rho' \, dx \nonumber\\
&= \frac{s}{2} \int_\R \Big( \frac{1}{f_1} \Big)'  |\rho'|^2 \, dx - (\Re \lambda) \int_\R \frac{|\rho'|^2}{|f_1|} \, dx - \Re \! \int_\R \Big( \frac{1}{f_1} \Big)' (m')^* \rho' \, dx \nonumber\\
&= - \frac{s}{2} \int_\R \frac{|f_1'|}{|f_1|^2} |\rho'|^2 \, dx - (\Re \lambda) \int_\R \frac{|\rho'|^2}{|f_1|} \, dx - \Re \! \int_\R \Big( \frac{1}{f_1} \Big)' (m')^* \rho' \, dx.\label{Thet1}
\end{align}
Moreover,
\begin{equation}
\label{Thet2}
\Re \! \int_\R \Big( \frac{1}{f_1} \Big)' m^* \rho'' \, dx = - \Re \! \int_\R \Big( \frac{1}{f_1} \Big)' (m')^* \rho' \, dx -\Re \! \int_\R \Big( \frac{1}{f_1} \Big)'' m^* \rho' \, dx,
\end{equation}
and,
\begin{equation}
\label{Thet3}
\begin{aligned}
\Re \! \int_\R \frac{(\sqrt{P})'}{f_1} m^* \!\Big( \frac{\rho'}{\sqrt{P}}\Big)' \, dx &= - \Re \! \int_\R \frac{(\sqrt{P})''}{f_1 \sqrt{P}} m^* \rho' \, dx - \Re \! \int_\R \frac{(\sqrt{P})'}{f_1 \sqrt{P}} (m')^* \rho' \, dx + \\
& \quad - \Re \! \int_\R \frac{(\sqrt{P})'}{\sqrt{P}} \Big( \frac{1}{f_1} \Big)' m^* \rho' \, dx.
\end{aligned}
\end{equation}
Substitute \eqref{Thet1}, \eqref{Thet2} and \eqref{Thet3} into \eqref{util2}. This yields
\begin{equation}
\label{labuena}
\begin{aligned}
(\Re \lambda) \int_\R \left[ |\rho|^2 + \frac{|m|^2}{|f_1|} + \frac{k^2}{2} \frac{|\rho'|^2}{|f_1|} \right] \, dx &+ \int_\R g |m|^2 \, dx + \mu \int_\R \frac{|m'|^2}{|f_1|}\, dx + \\ &+ \frac{k^2 s}{4} \int_\R \frac{|f_1'|}{|f_1|^2} |\rho'|^2 \, dx = \Theta,
\end{aligned}
\end{equation}
where
\begin{equation}
\label{defTheta}
\begin{aligned}
\Theta &:= - \frac{k^2}{2} \Re \big( I_1 + I_2 \big),\\
I_1 &:= \int_\R \left[ 2 \Big( \frac{1}{f_1} \Big)' + \frac{(\sqrt{P})'}{f_1 \sqrt{P}} \right] (m')^* \rho' \, dx,\\
I_2 &:= \left[ \Big( \frac{1}{f_1} \Big)'' + \frac{(\sqrt{P})'}{\sqrt{P}} \Big(\frac{1}{f_1} \Big)' +  \frac{(\sqrt{P})''}{f_1 \sqrt{P}} \right] m^* \rho' \, dx.
\end{aligned}
\end{equation}

For $0 < \e \ll 1$ sufficiently small, apply Lemma \ref{lem:f_i} and Corollary \ref{corestg} to deduce from \eqref{labuena} the following estimate
\begin{equation}
\label{labuena2}
\begin{aligned}
(\Re \lambda) \int_\R \left[ |\rho|^2 + \frac{|m|^2}{|f_1|} + \frac{k^2}{2} \frac{|\rho'|^2}{|f_1|} \right] \, dx &+ \bar{C}\int_\R |P'| |m|^2 \, dx + \mu c_1 \int_\R |m'|^2 \, dx + \\ &+ \frac{c_1^2 k^2 s}{4 c_3} \int_\R |P'| |\rho'|^2 \, dx \leq \Theta.
\end{aligned}
\end{equation}

In this fashion we have gathered all the terms with a definite sign in the left hand side of estimate \eqref{labuena2}. Next, we show that all the terms in \eqref{defTheta} can be absorbed into the left hand side provided $\e > 0$ is sufficiently small. In the sequel, $C > 0$ denotes a uniform positive constant, independent of $\e, \lambda$ and of the perturbation variables (which may depend on the parameters $\mu, k, s > 0$), whose value may change from line to line.

Notice that
\[
\Big(\frac{1}{f_1} \Big)' = - \frac{f_1'}{f_1^2} = - \frac{|f_1'|}{|f_1|^2} < 0,
\]
and as a consequence,
\begin{equation}\label{lados}
\left| \Big(\frac{1}{f_1} \Big)' \right| \leq \frac{c_3}{c_1^2} |P'| \leq C |P'|,
\end{equation}
because of \eqref{allproperties}; moreover, 
\begin{equation}
\label{latres}
\left| \Big(\frac{1}{f_1} \Big)'' \right| =\left| - \frac{f_1''}{f_1^2} + 2 \frac{(f_1')^2}{f_1^3} \right| \leq \frac{c_4 \e}{c_1^2} |P'| + \frac{2 c_3^2}{c_1^3}|P'|^2 \leq C \e |P'|,
\end{equation}
for all $x \in \R$ and some $C > 0$, in view that $P' = \cO(\e^2)$. Likewise, from Lemma \ref{lem:f_i} and estimates \eqref{eq:prop-P}, we have the bounds
\begin{equation}
\label{lacuatro}
\begin{aligned}
|(\sqrt{P})'| = \frac{1}{2} \frac{|P'|}{\sqrt{P}} \leq \frac{1}{2} \frac{|P'|}{\sqrt{P^+}} &\leq C |P'|,\\
\left| \frac{(\sqrt{P})'}{f_1 \sqrt{P}} \right| \leq \frac{C|P'|}{c_1 \sqrt{P^+}} &\leq C |P'|,\\
\left| \frac{(\sqrt{P})''}{f_1 \sqrt{P}} \right| \leq \frac{1}{2} \frac{|P''|}{|f_1 P|} + \frac{1}{4} \frac{|P'|^2}{|f_1| P^2} &\leq C \e |P'|.
\end{aligned}
\end{equation}
Apply \eqref{lados} and \eqref{lacuatro}, as well as Young's inequality, to obtain the estimate
\begin{equation}
\label{eta1}
\begin{aligned}
\left| \frac{k^2}{2} \Re I_1 \right| &\leq C \int_\R \left( 2 \left| \Big( \frac{1}{f_1} \Big)' \right|
 + \left| \frac{(\sqrt{P})'}{f_1 \sqrt{P}} \right| \right) |m'| |\rho'| \, dx \\
 &\leq C \int_\R |P'| |m'||\rho'| \, dx \\
 &\leq \frac{C}{4 \eta_1} \int_\R |P'| |m'|^2 \, dx + C \eta_1 \int_\R |P'| |\rho'|^2 \, dx \\
 &\leq \frac{C \e^2}{4\eta_1} \int_\R |m'|^2 \, dx + C \eta_1 \int_\R |P'| |\rho'|^2 \, dx,
 \end{aligned}
 \end{equation}
 for any $\eta_1 > 0$, inasmuch as $|P'|\leq C \e^2$ (see \eqref{eq:prop-P}). In the same fashion, apply \eqref{latres} and \eqref{lacuatro} to get
 \begin{equation}
\label{eta2}
\begin{aligned}
\left| \frac{k^2}{2} \Re I_2 \right| &\leq C \int_\R \left( \left| \Big( \frac{1}{f_1} \Big)'' \right|
 + \left| \frac{(\sqrt{P})'}{\sqrt{P}} \Big( \frac{1}{f_1} \Big)' \right| +  \left| \frac{(\sqrt{P})''}{f_1 \sqrt{P}} \right|\right) |m| |\rho'| \, dx \\
 &\leq C\e \int_\R |P'| |m||\rho'| \, dx \\
 &\leq \frac{C\e}{4 \eta_2} \int_\R |P'| |m|^2 \, dx + C \e \eta_2 \int_\R |P'| |\rho'|^2 \, dx,
 \end{aligned}
 \end{equation}
 for any $\eta_2 > 0$. 
 Now substituting \eqref{eta1} and \eqref{eta2} into \eqref{labuena2}, we infer
 \begin{equation}
 \label{energyest}
 \begin{aligned}
(\Re \lambda) \int_\R \left[ |\rho|^2 + \frac{|m|^2}{|f_1|} + \frac{k^2}{2} \frac{|\rho'|^2}{|f_1|} \right] \, dx &+ \left( \bar{C} - \frac{C\e}{4 \eta_2}\right)\int_\R |P'| |m|^2 \, dx + \\
&+ \left( \mu c_1 - \frac{C \e^2}{4 \eta_1}\right) \int_\R |m'|^2 \, dx + \\ &+ \left( \frac{c_1^2 k^2 s}{4 c_3} - C\eta_1 - C\e \eta_2 \right)\int_\R |P'| |\rho'|^2 \, dx \\&\leq 0.
\end{aligned}
 \end{equation}
Choose $\eta_1 = \e$ and $\eta_2 = \sqrt{\e}$. Then estimate \eqref{energyest} is of the form
\[
 \begin{aligned}
(\Re \lambda) \int_\R \left[ |\rho|^2 + \frac{|m|^2}{|f_1|} + \frac{k^2}{2} \frac{|\rho'|^2}{|f_1|} \right] \, dx &+ \left( \bar{C} - \cO(\sqrt{\e}) \right)\int_\R |P'| |m|^2 \, dx + \\
&+ \big( \mu c_1 - \cO(\e) \big) \int_\R |m'|^2 \, dx + \\ &+ \left( \frac{c_1^2 k^2 s}{4 c_3} - \cO(\e) \right)\int_\R |P'| |\rho'|^2 \, dx \\&\leq 0.
\end{aligned}
\]
This is a contradiction with $\Re \lambda \geq 0$ if $0 < \e \ll1 $ is small enough. We conclude that $\Re \lambda < 0$ and the lemma is proved.
\end{proof}

\begin{theorem}[spectral stability]
\label{mainthm}
Assume $P^- > 0$ and $s\in(0,\bar s)$ where
\begin{equation*}
\bar s = \min\left\{2c_s(P^-),\left(\frac{\gamma+1}{2}\right) c_s(P^-)\right\}.
\end{equation*}
There exists $\bar\e>0$ such that if the shock amplitude, $\e = P^- - P^+$, satisfies $\e\in(0,\bar\e)$, then the dispersive shock profile is spectrally stable.
%
\end{theorem}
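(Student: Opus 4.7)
The proof is essentially an assembly of the three key ingredients built up in sections \ref{secspectprob} and \ref{sec:main}, namely Lemma \ref{lemessspect} (essential spectrum), Lemma \ref{lemequiv} (equivalence of point spectra up to the translational eigenvalue), and Lemma \ref{lemeest} (point spectrum of the integrated operator). The plan is therefore to pick $\bar{\e}>0$ small enough that all three lemmas are simultaneously applicable, and to write $\sigma(\cL)=\ess(\cL)\cup\ptsp(\cL)$ as the union of pieces that we already know lie in $\{\Re\lambda<0\}\cup\{0\}$.

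More concretely, first I would observe that the hypothesis $s\in(0,\bar s)$ in the theorem, with $\bar s=\min\{2c_s(P^-),\tfrac{\gamma+1}{2}c_s(P^-)\}$, implies in particular that $s\in(0,2c_s(P^-))$, so Lemma \ref{lemessspect} applies: there exists $\e_{\mathrm{ess}}>0$ such that for $0<\e<\e_{\mathrm{ess}}$ one has
\begin{equation*}
\ess(\cL)\subset\{\lambda\in\C\,:\,\Re\lambda<0\}\cup\{0\}.
\end{equation*}
At the same time, the same hypothesis $s\in(0,\bar s)$ is exactly what Lemma \ref{lemeest} requires, so there exists $\e_{\mathrm{pt}}>0$ such that for $0<\e<\e_{\mathrm{pt}}$ one has
\begin{equation*}
\ptsp(\cI)\subset\{\lambda\in\C\,:\,\Re\lambda<0\}.
\end{equation*}

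Set $\bar\e:=\min\{\e_{\mathrm{ess}},\e_{\mathrm{pt}}\}>0$. For any $\e\in(0,\bar\e)$, Lemma \ref{lemequiv} yields $\ptsp(\cL)\setminus\{0\}\subset\ptsp(\cI)\subset\{\Re\lambda<0\}$, and combining with the essential-spectrum inclusion we obtain
\begin{equation*}
\sigma(\cL)=\ess(\cL)\cup\ptsp(\cL)\subset\{\lambda\in\C\,:\,\Re\lambda<0\}\cup\{0\},
\end{equation*}
which is precisely the spectral stability stated in Definition \ref{defspectralstab}. The only mildly delicate point is bookkeeping: one must verify that the smallness thresholds $\varepsilon_0,\varepsilon_1,\varepsilon_2$ introduced in Lemmas \ref{lem:P-prop}, \ref{lem:f_i} and \ref{lemma_s} (as well as the smallness needed for the energy estimate to absorb the $\cO(\sqrt\e)$ and $\cO(\e)$ error terms in \eqref{energyest}) are all compatible and can be taken uniformly as a single $\bar\e$; since there are only finitely many such conditions, each depending only on $P^-$, $s$, $\gamma$, $\mu$ and $k$, this is automatic. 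No new estimates are needed beyond those already established, so no genuine obstacle remains; the theorem is essentially a corollary of the preceding section.
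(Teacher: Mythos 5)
Your proposal is correct and follows essentially the same route as the paper's own proof: choose $\bar\e$ small enough that Lemmata \ref{lemessspect}, \ref{lemequiv} and \ref{lemeest} (and the supporting structural lemmas) all apply, then combine $\ptsp(\cL)\setminus\{0\}\subset\ptsp(\cI)\subset\{\Re\lambda<0\}$ with the essential-spectrum inclusion to conclude via Definition \ref{defspectralstab}. Your remark that $s\in(0,\bar s)$ implies $s\in(0,2c_s(P^-))$, so that Lemma \ref{lemessspect} is indeed applicable, is a point the paper leaves implicit but is correctly handled.
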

\begin{proof}
Under the condition $s \in (0, \bar{s})$, choose $\bar{\e} > 0$ sufficiently small such that the conclusions of Lemmata \ref{lem:P-prop}, \ref{lem:f_i}, \ref{lemma_s}, \ref{lemessspect} and \ref{lemeest}, as well as Corollaries \ref{corexpdecay} and \ref{corestg}, hold. Then from Lemmata \ref{lemequiv} and \ref{lemeest} we conclude the stability of the point spectrum for the linearized operator around the wave, $\ptsp(\cL) \subset \{ \Re \lambda < 0 \} \cup \{0\}$. Combined with the stability of the essential spectrum (Lemma \ref{lemessspect}), we obtain the result for sufficiently weak dispersive shocks.
\end{proof}

\begin{rem}
Let us make some comments on the condition $s\in(0,\bar s)$, which is the only assumption needed 
to prove spectral stability of small amplitude dispersive shock profiles.
First of all, let us rewrite the constant $\bar s$ as
$$\bar s:=
\begin{cases}
	\displaystyle\left(\frac{\gamma+1}{2}\right)c_s(P^-), \qquad &\mbox{ if } \, 1\leq\gamma<3,\\
	2c_s(P^-), & \mbox{ if } \, \gamma\geq3.
\end{cases}
$$
Hence, if $\gamma\geq3$, then the condition
\begin{equation}\label{eq:phys}
	s\in(0,2c_s(P^-))
\end{equation} 
implies stability of both the essential (see Lemma \ref{lemessspect} and \cite{LMZ20a}) 
and the point spectra (see Theorem \ref{mainthm});
while if $\gamma\in[1,3)$, then we need a stronger condition to prove that there are no eigenvalues with strictly positive real part,
that is 
\begin{equation}\label{eq:unphys}
	s\in\left(0,\displaystyle\left(\frac{\gamma+1}{2}\right)c_s(P^-)\right).
\end{equation} 
It is worth noticing that condition \eqref{eq:phys} has a specific physical meaning:
as we proved in Lemmata \ref{lemma_s} and \ref{lemessspect}, it is equivalent to
$$|u^\pm|<c_s(P^\pm),$$
that is, the end states $(P^\pm,J^\pm)$ are subsonic.
On the other hand, to the best of our knowledge condition \eqref{eq:unphys} does not have any particular physical meaning;
for instance, in Lemma \ref{lemma_s} we proved that the following condition on the velocity of the end state $(P^-,J^-)$,
$$|u^-|<\frac{\gamma-1}{2}c_s(P^-),$$
implies \eqref{eq:unphys}, and the latter condition on $u^-$ is very restrictive when $\gamma\to1^+$.
However, even if condition \eqref{eq:unphys} is instrumental in the proof of Theorem \ref{mainthm},
we conjecture that it is not necessary for stability of small amplitude shock profiles.
\end{rem}

\section{Discussion and open problems}


In this paper we have proved the conjecture by Lattanzio \emph{et al.} \cite{LMZ20a,LMZ20b} that subsonic viscous-dispersive shocks for the QHD system with linear viscosity \eqref{QHD-L} are spectrally stable in the small-amplitude regime. Small viscous-dispersive shocks comply with the compressivity of the shock, that is, they remain monotone, and exhibit exponential decay, sharing in this fashion important features with purely viscous shocks in fluid dynamics. In this work we exploit these properties in order to rigorously prove that the $L^2$-spectrum of the linearized operator around a small amplitude dispersive shock remains in the stable complex half plane. For that purpose, we implemented a novel energy estimate that handles the Bohm potential appearing in the nonlinear dispersive term. This contrasts with previous results with constant capillarity (see, e.g., \cite{Hu09}). Our result is compatible with the numerical evidence based on calculations of the associated Evans function presented in \cite{LMZ20b}. 

A natural question that remains open is whether small-amplitude dispersive shocks in QHD are also \emph{nonlinearly} stable. Up to our knowledge, the only work addressing the nonlinear stability of small amplitude, monotone viscous-dispersive shocks for compressible fluids is that of Zhang \emph{et al.} \cite{ZLY16}, for the particular case of the Navier-Stokes-Korteweg system with constant viscosity and capillarity coefficients. Thus, we believe that the study of the effects on stability of the nonlinear dispersive term of Bohmian type that appears in \eqref{QHD-L} is worth pursuing. 

It is to be observed that, according to the numerical calculations by Lattanzio \emph{et al.} \cite{LMZ20a,LMZ20b}, larger amplitude (and hence, oscillatory) dispersive profiles are also spectrally stable. Therefore, an important open problem is to analytically prove that spectral stability of dispersive shocks for system \eqref{QHD-L} holds beyond the small-amplitude regime. When the shock amplitude increases and the dispersive (or capillarity) coefficient plays a more significant role, the profiles exhibit oscillatory behavior. In this case, the monotonicity property no longer holds and the method of proof should change substantially (for a related discussion, see \cite{Hu09}). This is a problem that, because of its difficulty, warrants further investigations.
%

\section*{Acknowledgements}
The authors are grateful to Corrado Lattanzio for useful conversations. The work of D. Zhelyazov was supported by a Post-doctoral Fellowship by the Direcci\'{o}n General de Asuntos del Personal Acad\'{e}mico (DGAPA), UNAM. The work of R. G. Plaza was partially supported by DGAPA-UNAM, program PAPIIT, grant IN-104922.







\def\cprime{$'$}\def\cprime{$'$}\def\cprime{$'$}

\end{document}